\documentclass[12pt,leqno,a4paper]{article}
\usepackage{hyperref}
\usepackage{amsmath}
\usepackage{amssymb}
\usepackage{amsthm}
\usepackage{graphicx,epstopdf}
\usepackage{color}
\hypersetup{
    colorlinks=true,
    linkcolor=black,
    filecolor=black,      
    urlcolor=blue,
    citecolor=red,
}

\makeatletter
\setlength{\textwidth}{16cm}
\addtolength{\evensidemargin}{-1cm}
\addtolength{\oddsidemargin}{-1cm}

\newcommand{\guio}[1]{\nobreakdash-\hspace{0pt}}

\newtheorem*{theorem*}{Theorem}

\newtheorem*{lemmam*}{Lemma 5. (\cite{MOV})}
\newtheorem*{lemma*}{Lemma}
\newtheorem*{corollary*}{Corollary}

\newtheorem{remark*}{Remark}

\theoremstyle{definition}

\newtheorem*{acknowledgements}{Acknowledgements}

\newcommand{\Rn}{\mathbb{R}^n}

\newcommand{\R}{\mathbb{R}}
\newcommand{\C}{\mathbb{C}}

\newcommand{\ep}{\epsilon}

\def\newchi{\raise2pt\hbox{$\chi$}}
%
%

\title{The regularity of the boundary of vortex patches for some non-linear transport equations}

\author{J.C.\ Cantero, J.\ Mateu, J.\ Orobitg and J.\ Verdera}

\date{}

\begin{document}
\maketitle
\begin{abstract}
We prove the persistence of boundary smoothness of vortex patches for a non-linear transport equation in $\R^n$ with velocity field given by convolution
of the density with an odd kernel, homogeneous of degree $-(n-1)$ and of class $C^2(\R^n\setminus\{0\}, \R^n).$  This allows the velocity field to have non-trivial divergence. The quasi-geostrophic equation in $\R^3$  and the Cauchy transport equation in the plane are examples.
\bigskip

\noindent\textbf{AMS 2020 Mathematics Subject Classification:}  35Q31, 35Q35 (primary); 35Q49, 42B20 (secondary) 

\medskip

\noindent \textbf{Keywords:} transport equation, vortex patch, potential theory.
\end{abstract}

\section{Introduction}
The vorticity form of the Euler equation in the plane is
\begin{equation}\label{vor}
\begin{split}
&\partial_{t} \omega (x,t)+ v(x,t)\cdot \nabla \omega (x,t)=0,\\
&v(x,t)=(\nabla^{\perp} N*\omega (\cdot, t))(x),\\
&\omega(x,0)=\omega_{0}(x),
\end{split}
\end{equation}
where $x\in\R^2,$ $t\in\R,$ $N=\frac{1}{2\pi}\log|x|$ is the fundamental solution of the laplacian in the plane, $\nabla^{\perp} N$ is a rotation of $\nabla N$ of $90$\textdegree\, in the counterclockwise direction  and $\omega_0$ is the initial vorticity. A deep result of Yudovich \cite{Y} asserts that the vorticity equation is well posed in $L^\infty_c,$  the measurable bounded functions with compact support. A vortex patch is the special weak solution of \eqref{vor}
when the initial condition is the characteristic function of a bounded domain $D_0.$ Since the vorticity equation is a transport equation, vorticity is conserved along trajectories and thus $\omega(x,t)=\chi_{D_t}(x)$ for some domain $D_t.$ A challenging problem,
raised in the eighties, was to show that boundary smoothness persists for all times. Specifically, if $D_0$ has boundary of class $C^{1+\gamma}, \; 0< \gamma<1,$ then one would like $D_t$ to have boundary of the same class for all times. This was viewed as a $2$ dimensional problem which featured some of the main difficulties of the regularity problem for the Euler equation in $\R^3.$  It was conjectured, on the basis of numerical simulations,  that the boundary of $D_t$ could become of infinite length in finite time \cite{M}. Chemin proved that boundary regularity persists for all times \cite{Ch} using paradifferential calculus, and Bertozzi and Constantin found shortly after a minimal beautiful proof in \cite{BC} based on methods of classical analysis with a geometric flavor.

The vortex patch problem was considered for the aggregation equation with newtonian kernel in higher dimensions in \cite{BGLV}. The equation is
\begin{equation}\label{agg}
\begin{split}
&\partial_{t}\rho(x,t)+\operatorname{div}(\rho(x,t) v(x,t) )=0,\\
&v(x,t)=-(\nabla N*\rho(\cdot, t))(x),\\
&\rho (x,0)=\rho_0(x),
\end{split}
\end{equation}
$x\in\Rn$ and $t\in\R.$
In \cite{BLL} a well-posedness theory in $L^\infty_c$ was developed, following the path of \cite{Y} and \cite[Theorem 8.1]{MB}. When the initial condition is the characteristic function of a bounded domain one calls the unique weak solution a vortex patch, as for the vorticity equation. One proves in \cite{BGLV} that if the boundary of $D_0$ is of class $C^{1+\gamma}, \; 0 < \gamma < 1,$ then the solution of \eqref{agg} with initial condition $\rho_0=\chi_{D_0}$ is of the form
$$\rho(x,t) = \frac{1}{1 -t} \newchi_{D_t}(x), \; x \in \Rn, \;
0 \le t < 1,$$
where $D_t$ is a  $C^{1 + \gamma}$ domain for all $t < 1$. The restriction to times less than $1$ obeys a  blow up phenomenon studied in \cite{BLL}. Hence the preceding result is the analog of Chemin's theorem for the aggregation equation. See \cite{BK} for a more general result concerning striated regularity.

After a change in the time scale the aggregation equation for vortex patches becomes the non-linear transport equation
\begin{equation}\label{aggtrans}
\begin{split}
&\partial_{t}\rho(x,t)+v(x,t)\cdot \nabla \rho(x,t)=0,\\
&v(x,t)=-(\nabla N*\rho(\cdot, t))(x),\\
&\rho (x,0)=\chi_{D_{0}}(x),
\end{split}
\end{equation}
$x\in\Rn,$ $t\in\R,$
where $N$ is the fundamental solution of the laplacian in $\Rn$ and $D_0$ is a bounded domain.   In this formulation one proves in \cite{BGLV} that if $D_0$ is of class  $C^{1 + \gamma},$  then there is a solution of \eqref{aggtrans} of the form $\chi_{D_t}(x)$ with
$D_t$ a domain of class $C^{1 + \gamma}.$  To the best of our knowledge there is no well-posedness theory in $L^\infty_c$ for \eqref{aggtrans}, for a general initial condition in $L^\infty_c.$  However, if the initial condition is the characteristic function of a domain $D_0$, not necessarily smooth, one has existence and uniqueness for the transport equation \eqref{aggtrans}.  For existence, solve the equation \eqref{agg} with initial condition $\rho_0(x)=\chi_{D_{0}}(x).$ Then the unique solution has the form $\rho(x,t) = \frac{1}{1 -t} \newchi_{D_t}(x)$
and hence, after changing the time scale as in \cite{BLL}, one obtains a solution for \eqref{aggtrans} which is a vortex patch. For uniqueness, we resort to an argument which combines results of \cite{CJM1} and \cite{CJM2} to prove that each weak solution of \eqref{aggtrans} in $L^\infty_c$ is lagrangian and so a vortex patch.
Changing the time scale one obtains a weak solution of \eqref{agg}, which is unique.

The proof follows the scheme of \cite{BC} and overcomes difficulties related
to the fact that the velocity field has a non-zero divergence and to the higher dimensional context. The reader can consult \cite{BGLV} for connections with the existing literature and for references to models leading to various aggregation equations.

This paper originated from an attempt  to deeply understand the role of the kernel that gives the velocity field. For the aggregation equation the kernel is $-\nabla N$ and for the vorticity equation in the plane the kernel is a rotation of $90$  degrees of $\nabla N.$ These are odd kernels, smooth off the origin and homogeneous of degree $-(n-1).$ We wondered what  would happen for the Cauchy kernel 
\begin{equation*}\label{cau}
\frac{1}{2\pi z}= L(\nabla N), \quad\quad \text{with}\quad\quad   L(x,y)=(x,-y), \quad z=(x,y)\in \R^2=\C.
\end{equation*}
 Although apparently there is no model leading to the non-linear transport equation given by the Cauchy kernel, from the mathematical perspective the question makes sense. We then embarked in the study of the non-linear transport equation
\begin{equation}\label{transcau}
\begin{split}
&\partial_{t}\rho(z,t)+v(z,t)\cdot \nabla \rho(z,t)=0,\\
&v(z,t)=\left(\frac{1}{2\pi z} * \rho (\cdot, t)\right)(z),\\
&\rho(z,0)=\chi_{D_{0}}(z),
\end{split}
\end{equation}
where $z=(x,y)$ is the complex variable and $D_0$ is a bounded domain with $C^{1+\gamma}$ boundary, $0<\gamma<1$. A first remark is that apparently there does not exist a well-posedness theory in $L^\infty_c$ for the equation above, but this does not prevent the study of smooth vortex patches, as a particular subclass of  $L^\infty_c$ enjoying a bit of smoothness. 

 To grasp what could be expected we looked at an initial datum which is the characteristic function of the domain enclosed by an ellipse
\begin{equation*}\label{ell}
D_{0}=\left\{ (x,y)\in \mathbb{R}^{2}:  \frac{x^{2}}{a^{2}}+\frac{y^{2}}{b^{2}}<1\right\}.
\end{equation*}
We proved that there exists a weak solution of \eqref{transcau} of the form $\rho(z,t)=\chi_{D_t}(z)$ with $D_t$ the domain enclosed
by an ellipse with semiaxes $a(t)$ and $b(t)$ collapsing to a segment on the horizontal axis as $t \to \infty.$ 

A key remark is that \eqref{transcau} is not rotation invariant. Fix an angle $0 < \theta < \frac{\pi}{2}$ and consider as initial domain the set enclosed by a tilted ellipse
\begin{equation*}\label{tilted}
D_{0}=e^{i\theta}
\left\{ (x,y)\in \mathbb{R}^{2}:  \frac{x^{2}}{a^{2}}+\frac{y^{2}}{b^{2}}<1\right\}.
\end{equation*}
As before we find a weak solution of \eqref{transcau} of the form $\rho(z,t)=\chi_{D_t}(z)$ with $D_t$ the domain enclosed
by an ellipse with semiaxes $a(t)$ and $b(t)$ forming an angle $\theta(t)$ with the horizontal axis. The evolution is different according to whether $0<\theta\le \frac{\pi}{4}$ or $  \frac{\pi}{4} <\theta <  \frac{\pi}{2}.$ Under the assumption that $a_0>b_0,$ in the case $0<\theta\le \frac{\pi}{4}$ the semi-axis $a(t)$ increases as $t \to \infty$ to a positive number $a_\infty,$ $b(t)$ decreases to $0$ and $\theta(t)$ decreases to a positive angle $\theta_\infty.$ Hence $D_t$ collapses into an interval on a line forming a positive angle with the horizontal axis.  If $  \frac{\pi}{4} <\theta <  \frac{\pi}{2},$ then for small times $a(t)$ decreases and $b(t)$ increases, so that the ellipse at time $t$ tends initially to become a circle.  This happens until a critical time is reached after which $a(t)$ increases and $b(t)$ decreases. The angle $\theta(t)$ decreases for all positive times and at some point it becomes $\frac{\pi}{4} ;$ 
after that one falls into the regime of the first case and the domain $D_t$ collapses as $t\to \infty,$ into a segment on a line which forms a positive angle with the horizontal axis. The case $a_0<b_0$ is similar and can be reduced to the previous situation by conjugation (symmetry with respect to the horizontal axis).

Detailed proofs of the results just described can be found in section \ref{ell}.  What they show is that the behavior of vortex patches for the Cauchy transport equation can be much more complicated than for the vorticity or aggregation equations. This is also easily understood if one looks at the divergence of the vector field in \eqref{transcau}. If $\partial$ and $\overline{\partial}$ denote respectively the derivatives with respect to the $z$ and $\bar{z}$ variables, then we get
\begin{equation*}\label{dibar}
2 \,\overline{\partial} v(z,t)=  \rho(z,t)
\end{equation*}
and
\begin{equation*}\label{di}
2\, \partial v(z,t)= -\frac{1}{ \pi} \operatorname{p.v.} \int \frac{1}{(z-w)^2} \rho(w,t)\, dA(w)= \operatorname{B}(\rho(\cdot,t))(z),
\end{equation*}
where $\operatorname{B}$ is the Beurling transform, one of the basic Calder\'on-Zygmund operators in the plane. Here $dA$ is 2 dimensional Lebesgue measure. The divergence of $v$ is given by
\begin{equation*}
\begin{split}
\operatorname{div}v= \Re(2\, \partial v) & = - \operatorname{p.v.} \frac{1}{ \pi} \int \Re \left(\frac{1}{(z-w)^2} \right) \rho(w,t)\, dA(w)
\\*[5pt]
& = 
-  \operatorname{p.v.} \frac{1}{ \pi} \left(\frac{x^2-y^2}{|z|^4} \star \rho(\cdot,t)\right)(z).
\end{split}
\end{equation*}
The last convolution is a Calder\'on-Zygmund operator (a second order Riesz transform) and so it does not map bounded
functions into bounded functions. The most one can say a priori on the divergence of the velocity field is that it is a $BMO$ function in the plane, provided the density $\rho(\cdot,t)$ is a bounded function. It is a well-known fact, already used in \cite{BC} and \cite{Ch}, that if
$D$ is a domain with boundary of class $C^{1+\gamma},$ then an even Calder\'on-Zygmund operator applied to $\chi_{D}$
is a bounded function. Thus we indeed expect $\operatorname{div}v$ to be bounded. Nevertheless, the expression of the divergence of the field in terms of a Calder\'on-Zygmund operator applied to the density is potentially difficult to handle.

We have succeeded in proving that there exists a weak solution of \eqref{transcau} of the form
$\chi_{D_t}$ with $D_t$ a domain with boundary of class $C^{1+\gamma}$ for all times $t \in \R.$ This weak solution is unique in the class of characteristic functions of $C^{1+\gamma}$ domains. 

The Cauchy kernel belongs to a wider class for which the preceding well-posedness theorem holds. We refer to the class of kernels in $\Rn$ which are odd, homogeneous of degree $-(n-1)$ and of class $C^2(\R^n\setminus\{0\}, \R^n).$  Interesting examples of such kernels are those of the form $L(\nabla N),$  where $L$ is a linear mapping from $\R^n$ into itself and $N$ is the fundamental solution of the laplacian in $\R^n.$  They are harmonic off the origin. In particular in $\R^3$ one can take
 $L(x_1,x_2,x_3)= \left(-x_2, x_1, 0 \right).$ The corresponding field is divergence free and the associated equation is the well-known quasi-geostrophic equation. See \cite{GHM} for recent results on rotating vortex patches for the quasi-geostrophic equation. 

Our main result is the following.

\begin{theorem*}\label{teo}
Let $k : \R^n \setminus \{0\} \rightarrow \R^n$ be an odd function, homogeneous of degree $-(n-1)$ and of class  $C^2\left(\R^n\setminus\{0\}, \R^n\right).$ Let $D_{0}$ be a bounded domain with boundary of class~$C^{1+\gamma}$, $0<\gamma<1$. Then the non-linear transport equation
\begin{equation}\label{transL}
\begin{split}
&\partial_{t}\rho (x,t)+v(x,t)\cdot \nabla \rho (x,t)=0,\\
&v(x,t)= (k \star \rho (\cdot, t))(x),\\
&\rho (x,0)= \chi_{D_{0}}(x)
\end{split}
\end{equation}
$x\in\R^n, \; t\in \R,$ has a weak solution of the form
$$
\rho(x,t)=\chi_{D_{t}}(x),\quad x\in\mathbb{R}^{n},\quad t\in \R,
$$
with $D_{t}$ a bounded domain with boundary of class~$C^{1+\gamma}$.

This solution is unique in the class of characteristic functions of domains with boundary of class~$C^{1+\gamma}$.
\end{theorem*}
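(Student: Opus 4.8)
The plan is to follow the Bertozzi--Constantin scheme from \cite{BC}, adapted to the higher-dimensional setting and to a velocity field with nonzero divergence, exactly as was done in \cite{BGLV} for the aggregation kernel $-\nabla N$. The key geometric object is a defining function: parametrize $\partial D_0$ by its outward unit normal $\vec n_0$ (or, equivalently, carry along a vector field $\vec\varphi_0$ tangent to $\partial D_0$ of class $C^\gamma$ with $|\vec\varphi_0|$ bounded below), and let it evolve under the flow $\Phi_t$ generated by $v(\cdot,t)=k\star\chi_{D_t}$. Pushing forward, $D_t=\Phi_t(D_0)$ and the transported tangent field $\vec\varphi(x,t)=(\nabla\Phi_t\,\vec\varphi_0)\circ\Phi_t^{-1}(x)$ satisfies the linear transport equation $\partial_t\vec\varphi+v\cdot\nabla\vec\varphi=(\nabla v)\,\vec\varphi$. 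The boundary of $D_t$ is of class $C^{1+\gamma}$ as long as $\vec\varphi(\cdot,t)$ stays in $C^\gamma$ and stays bounded away from $0$ in modulus on $\partial D_t$; so the whole problem reduces to an a priori estimate controlling $\|\vec\varphi(\cdot,t)\|_{C^\gamma}$ and $\inf|\vec\varphi(\cdot,t)|$ on a time interval, followed by a continuation/bootstrap argument that iterates this to all $t\in\R$.

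The analytic heart is the gradient $\nabla v(\cdot,t)=\nabla(k\star\chi_{D_t})$. Since $k$ is odd, homogeneous of degree $-(n-1)$ and $C^2$ off the origin, each entry of $\nabla k$ is an even, homogeneous-of-degree-$(-n)$ Calder\'on--Zygmund kernel, so $\nabla v$ is (up to a bounded multiple of $\chi_{D_t}$ from the ``solid'' part of the principal value) a sum of even Calder\'on--Zygmund operators applied to $\chi_{D_t}$. The crucial estimate, which goes back to \cite{BC} and is used in \cite{BGLV}, is that an even Calder\'on--Zygmund operator applied to the characteristic function of a $C^{1+\gamma}$ domain $D$ is bounded, with the bound controlled by the $C^\gamma$ norm of the normal and by $\|\vec\varphi\|_{C^\gamma}$, $\inf|\vec\varphi|$; and moreover its $C^\gamma$ seminorm on $\overline{D_t}$ is controlled logarithmically, in the Bertozzi--Constantin fashion. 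Feeding this into the transport equation for $\vec\varphi$ and using the standard log-estimate for the flow map (Gr\"onwall applied to $\frac{d}{dt}\log(\text{something})$) yields a double-exponential but finite bound on $\|\vec\varphi(\cdot,t)\|_{C^\gamma}$ on any compact time interval, together with a lower bound on $\inf|\vec\varphi(\cdot,t)|$; crucially these bounds do not force the interval to shrink, so the solution persists for all time.

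The existence half is then finalized by a standard approximation/fixed-point argument: mollify the data or discretize in time, produce approximate patches whose boundaries satisfy the uniform $C^{1+\gamma}$ bounds just described, and pass to the limit using Arzel\`a--Ascoli on the flow maps and on the tangent fields; the compactness is exactly what the a priori estimates buy. For uniqueness within the class of characteristic functions of $C^{1+\gamma}$ domains, one argues directly at the level of the flow: given two such solutions $\chi_{D_t}$ and $\chi_{\widetilde D_t}$, estimate $\|\Phi_t-\widetilde\Phi_t\|$ in a suitable norm. The difference of the velocity fields is $k\star(\chi_{D_t}-\chi_{\widetilde D_t})$, and since $k$ is only of homogeneity $-(n-1)$ (not a Calder\'on--Zygmund kernel), $k\star(\cdot)$ gains smoothness: one controls $\|k\star(\chi_{D_t}-\chi_{\widetilde D_t})\|_{\mathrm{Lip}}$ by the measure of the symmetric difference $D_t\triangle\widetilde D_t$, hence by $\|\Phi_t-\widetilde\Phi_t\|_{L^\infty(\partial D_0)}$ (using the uniform $C^{1+\gamma}$ bounds to convert boundary displacement into volume), and closes a Gr\"onwall loop forcing $\Phi_t=\widetilde\Phi_t$.

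I expect the main obstacle to be the control of the $C^\gamma$ seminorm of $\nabla v$ on $\overline{D_t}$ — that is, establishing, with quantitative dependence on the geometry of $\partial D_t$, that even Calder\'on--Zygmund operators map $\chi_{D_t}$ into $C^\gamma(\overline{D_t})$ with the right logarithmic structure needed to close the Gr\"onwall argument. This is where the nonzero divergence of $v$ (equivalently, the appearance of the second Riesz transforms in $\operatorname{div}v$, which a priori land only in $BMO$) has to be handled carefully: one must exploit the $C^{1+\gamma}$ regularity of the patch boundary — not merely boundedness of the density — to upgrade $\operatorname{div}v$ from $BMO$ to $C^\gamma$ on $\overline{D_t}$, uniformly along the evolution. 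Secondary technical points are the non-rotation-invariance of the equation (which is irrelevant to the estimates, only to the qualitative description of solutions) and the fact that there is no general $L^\infty_c$ well-posedness theory, so existence and uniqueness must both be carried out entirely within the smooth-patch class.
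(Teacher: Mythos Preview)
Your high-level plan follows the Bertozzi--Constantin/\cite{BGLV} template, but it misses the specific obstruction that makes a general odd kernel genuinely harder than the aggregation kernel $-\nabla N$. In \cite{BGLV} one has $\operatorname{div} v = -\chi_{D_t}$, a constant on the patch; here $\operatorname{div} v$ is an honest even Calder\'on--Zygmund operator applied to $\chi_{D_t}$. Two concrete things break. First, the transported defining function $\varphi_t=\varphi_0\circ X^{-1}(\cdot,t)$ is not $C^1$ across $\partial D_t$ (its gradient jumps), and the corrected defining function $\Phi(\cdot,t)=\det\nabla X(X^{-1}(\cdot,t),t)\,\varphi_t$ satisfies
\[
\tfrac{D}{Dt}\nabla\Phi \;=\; \nabla(\operatorname{div} v)\,\Phi \;+\; (\operatorname{div} v)\,\nabla\Phi \;-\; (\nabla v)^t\,\nabla\Phi.
\]
The ``solitary'' term $\nabla(\operatorname{div} v)\,\Phi$ contains second derivatives of $v$ and is a priori unbounded; one needs the Vasin-type estimate $|\nabla(\operatorname{p.v.}L\star\chi_D)(x)|\le C\,\operatorname{dist}(x,\partial D)^{\gamma-1}$ for even CZ kernels $L$ to see that this term extends by $0$ to $\partial D_t$, so that the evolution of $\nabla\Phi$ can be read \emph{on the boundary only}. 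Your proposal does not mention this term or this lemma; it is absent in \cite{BGLV} precisely because there $\nabla(\operatorname{div} v)\equiv 0$ in the patch.

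Second, and this is the main point: once one is forced to work on $\partial D_t$, the quantity being propagated is $\|\nabla\Phi(\cdot,t)\|_{\gamma,\partial D_t}$, not $\|\nabla\Phi(\cdot,t)\|_{\gamma,\R^n}$. The standard \cite{BC} commutator lemma estimates the H\"older seminorm of $(\operatorname{div} v)\,\nabla\Phi-(\nabla v)^t\,\nabla\Phi$ by $C\,(1+\|\nabla v\|_\infty)\,\|\nabla\Phi\|_{\gamma,\R^n}$, which does not close the loop. The paper's new mechanism is to rewrite each component of this expression as a sum of \emph{differences} of two solid commutators, observe that such a difference (but not a single commutator!) equals a difference of boundary commutators via the divergence theorem, and then invoke Whitney's extension theorem on the jet $(0,\nabla\Phi)|_{\partial D_t}$ to manufacture a $\Psi\in C^{1+\gamma}(\R^n)$ with $\|\nabla\Psi\|_{\gamma,\R^n}\le C\,\|\nabla\Phi\|_{\gamma,\partial D_t}$ to which the solid-commutator estimate can be applied. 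This difference-of-commutators/Whitney bridge is the heart of the proof and is entirely missing from your outline. What you flag as ``the main obstacle'' --- that $\nabla v,\,\operatorname{div} v\in C^\gamma(\overline{D_t})$ --- is in fact the known \cite{MOV} lemma and is used only as input; the real obstacle is that the geometric quantity you are propagating lives only on $\partial D_t$.

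Two smaller points. In dimension $n>2$ a single transported tangent field does not encode $\partial D_t$; the paper works with a defining function (as does \cite{BGLV}), and your passage from ``outward unit normal'' to ``tangent field $\vec\varphi$'' is not quite coherent in $\R^n$. And for local existence the paper does not approximate or mollify: it writes the contour dynamics equation as an ODE in $C^{1+\gamma}(\partial D_0,\R^n)$ and applies Picard directly, which also delivers uniqueness in the $C^{1+\gamma}$-patch class without the separate Gr\"onwall-on-flows argument you sketch.
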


For the notion of weak solution see \cite[Chapter 8]{MB}.

  A remark on the special case in which the kernel $k$ is divergence free is in order. In this case, in particular for the quasigeostrophic equation, one has well-posedness in $L^\infty_c.$ 
Existence can be proved  following closely the argument in  \cite[Chapter 8]{MB} for the vorticity equation (for the smooth case see \cite{C}).  For uniqueness one resorts to \cite{NPS} whenever the kernel has the special form $L(\nabla N)$
with $L$ a linear map from $\R^n$ into itself.  Indeed, in that work uniqueness in $L^\infty_c$ is proven for the continuity equation in higher dimensions with velocity field given by convolution with $\pm \nabla N.$  The changes needed to take care of the case $L(\nabla N)$ are straightforward.  If $k$ is divergence free and satisfies the general hypothesis stated in the Theorem, then one appeals to \cite{CS}, where uniqueness is proved for lagrangian solutions, and to \cite{CJM1}  and  \cite{CJM2}, in which one shows that a weak solution is lagrangian.

The paper is organized as follows. In the next section we present an outline of the proof,   in which only a few facts are proven. The other sections are devoted to presenting complete proofs of our results. Section \ref{vasin} is devoted to an auxiliary result. In section \ref{def} an appropriate defining function for the patch at time $t$ is constructed. Section \ref{comm} deals with the material derivative of the gradient of the defining function and its expression in terms of differences of commutators. In section \ref{hest} we estimate the differences of commutators in the H\"older norm on the boundary via Whitney's extension theorem. Domains enclosed by ellipses as initial patches for the Cauchy transport equation are studied in section \ref{ell} and the unexpected phenomena that turn up along the vortex patch evolution are described in detail. Finally, there is an Appendix on the existence of principal values of singular integrals in a very special context.

Constants will be denoted by $C,$ mostly without an explicit reference to innocuous parameters, and may be different at different occurrences. If $D$ is a domain with smooth boundary $\sigma=\sigma_{\partial D}$ denotes the surface measure on $\partial D$ and when there is no confusion possible we omit the subscript. The exterior unit normal vector to  $\partial D$ at the point $x$ is denoted by
$\vec{n}(x)=(n_1(x), \dots, n_n(x)),$ without explicit reference to the boundary.

\section{Outline of the proof.}\label{out}
The proof follows the general scheme devised in \cite{BC}. There are serious obstructions caused by the fact that the field is not divergence free and we will explain below how to confront them. The reader will find useful to consult \cite{BC} and \cite{BGLV}.

\subsection{The contour dynamics equation.} Assume that one has a weak solution of \eqref{transL} of the form $\rho(x,t)=\chi_{D_t}(x),$
$D_t$ being a bounded domain of class $C^{1+\gamma}$ for $t$ in some interval $[0,T].$ The field $v(\cdot,t)$ is Lipschitz. This is due to the fact that our kernel has homogeneity $-(n-1)$ and so $\nabla v$ is given by a matrix whose entries are even convolution Calder\'on-Zygmund operators applied to the characteristic function of $D_t$ plus, possibly, a constant multiple of such a characteristic function (coming from a delta function at the origin).  Since $D_t$ has boundary of class $C^{1+\gamma}$ all entries of the matrix $\nabla v$ are functions in $L^\infty(\Rn)$ \cite{BC}. Thus the equation of particle trajectories (the flow mapping)
\begin{equation}\label{flowmap}
\begin{split}
\frac{dX(\alpha,t)}{dt}&=v(X(\alpha,t),t),\\*[5pt]
X(\alpha,0)&=\alpha
\end{split}
\end{equation}
has a unique solution and  $X(\cdot,t)$ is a bilipschitz mapping of $\Rn$ into itself, $0 \le t \le T$. Indeed one has the usual estimate
\begin{equation}\label{Xbi}
\|\nabla X(\cdot,t)\|_{\infty}\le \exp \int^{t}_{0}\|\nabla v(\cdot,s)\|_{\infty}\,ds.
\end{equation}
Since $k$ is homogeneous of degree $-(n-1)$ and smooth off the origin we have
\begin{equation}\label{}
k = \partial_1 (x_1 k ) + \partial_2 (x_2 k)+\dots +\partial_n (x_n k), \quad x=(x_1,\dots,x_n) \in \R^n\setminus \{0\}.
\end{equation}
This follows straightforwardly from Euler's theorem on homogeneous functions.

Assume that $\rho(x,t)=\chi_{D_t}(x)$ is a weak solution of the general equation \eqref{transL}. The velocity field is
\begin{equation*}\label{}
\begin{split}
v(\cdot,t)=\chi_{D_t}\star k &= \chi_{D_t}\star \left(\partial_1 (x_1 k)+\dots +\partial_n (x_n k)\right)\\*[5pt] 
&= \partial_1 \chi_{D_t}\star (x_1 k)+ \dots+ \partial_n \chi_{D_t}\star (x_n  k). \\*[5pt]
&= -n_1 d\sigma_{\partial D_t}\star (x_1 k)-\dots-n_n d\sigma_{\partial D_t}\star (x_n k).
\end{split}
\end{equation*}
Thus
\begin{equation}\label{vf}
\begin{split}
v(x,t) &=   -\sum_{j=1}^n   \int_{\partial D_t} (x_j-y_j) k(x-y) n_j(y)\, d\sigma_{\partial D_t}(y), \\*[5pt]
&= -   \int_{\partial D_t} k(x-y) \langle x-y,  \vec{n}(y) \rangle \, d\sigma_{\partial D_t}(y) , \quad x \in \R^n.
\end{split}
\end{equation}
The next step is to set $x=X(\alpha,t)$ and to make the change of variables $y=X(\beta,t)$ in the preceding surface integral. To do this conveniently let $T_1(\beta), ..., T_{n-1}(\beta)$ be an
orthonormal basis of the tangent space to $\partial D_0$ at the
point $\beta \in \partial D_0$ and let $DX(\cdot,t)$ be
the differential of $X(\cdot,t)$ as a differentiable mapping from $\partial D_0$ into $\Rn.$  The vectors $DX (\beta, t) (T_{j}(\beta))$ are tangent to $\partial D_t$ at the point $X(\beta,t)$ for $ 1\le j\le n-1.$ Hence the vector
\begin{equation}\label{extprod}
\bigwedge^{n-1}_{j=1} DX (\beta, t) (T_{j}(\beta))
\end{equation}
is orthogonal to $\partial D_t$ at the point $X(\beta,t)$ and a
different choice of the orthonormal basis $T_j(\beta), 1\leq j\leq
n-1,$  has the effect of introducing a $\pm$ sign in front of
\eqref{extprod}. We may choose the $T_j(\beta)$ so that
$\vec{n}(\beta), T_{1}(\beta), \dots, T_{n-1}(\beta)$ 
gives the standard orientation of $\mathbb{R}^n$.
Substituting the expression \eqref{vf} for the velocity field in \eqref{flowmap} and making the change of variables $y=X(\beta,t)$ we get
\begin{equation*}\label{flow2}
\begin{split}
&\frac{d}{dt}X(\alpha,t)=v(X(\alpha,t),t) \\*[5pt]
& = -   \int_{\partial D_0} k(X(\alpha,t)-X(\beta,t))  \left\langle X(\alpha,t) -X(\beta,t), \bigwedge^{n-1}_{j=1} DX (\beta, t) (T_{j}(\beta)) \right\rangle \, d\sigma_{\partial D_0}(\beta)
\end{split}
\end{equation*}
Let $X: \partial D_0 \rightarrow  \Rn$ be a mapping of class $C^{1+\gamma},$  such that for some constant $\mu >0$
\begin{equation}\label{Xb}
|X(\alpha)-X(\beta)|\ge  \frac{1}{\mu} |\alpha-\beta|,\quad \alpha,\beta \in\partial D_{0}.
\end{equation}
In other words $X\in C^{1+\gamma}(\partial D_0, \Rn), $ $X$ is bilipschitz onto the image and $\mu$ is a Lipschitz constant for the inverse mapping.

Define a mapping
$F(X) : \partial D_0 \rightarrow  \Rn$ by
\begin{equation}\label{efam}
\begin{split}
&F(X)(\alpha) \\*[5pt] 
&=-\int_{\partial D_{0}} k(X(\alpha)-X(\beta)) \,  \left\langle X(\alpha) -X(\beta), \bigwedge^{n-1}_{j=1} DX (\beta) (T_{j}(\beta)) \right\rangle \, d\sigma_{\partial D_0}(\beta).
\end{split}
\end{equation}
The contour dynamics equation (CDE) is 
\begin{equation*}\label{cde}
\begin{split}
\frac{dX(\alpha,t)}{dt}&=F(X(\cdot,t))(\alpha),\quad \alpha\in \partial D_{0},\\*[5pt]
X(\cdot,0)&=I,
\end{split}
\end{equation*}
where $I$ denotes the identity mapping on $\partial D_0.$ 

We conclude that if there exists a weak solution of the type we are looking for, then the flow restricted to $\partial D_0$ is a solution of the CDE.

To proceed in the reverse direction, we need some preparation. Let $\Omega$ be the open set in the Banach space 
$C^{1+\gamma}(\partial D_0, \Rn)$ consisting of those $X \in C^{1+\gamma}(\partial D_0, \Rn)$ satisfying \eqref{Xb} for some $\mu > 0.$
The set $\Omega$ is open in $C^{1+\gamma}(\partial D_0, \Rn)$ and the CDE can be thought of as an ODE in the open set $\Omega.$ We want to show that a solution $X(\cdot,t)$ to the CDE in an interval $(-T,T)$ provides
a weak solution of the non-linear transport equation \eqref{transL}. Clearly $X(\cdot,t)$ maps $\partial D_0$ onto a $n-1$ dimensional hypersurface $S_t.$  The goal now is to identify an open set $D_t$ with boundary $S_t.$  If we add the hypothesis that $\partial D_0$ is  connected, and hence a connected $n-1$ dimensional hypersurface of class $C^{1+\gamma},$ then the analog of the Jordan curve theorem holds \cite[p.89]{GP}. Then the complement of $\partial D_0$ in $\Rn$ has only one bounded connected component which is  $D_0.$  In the same vein, the complement of $S_t$ has only one bounded connected component, which we denote by $D_t,$  so that the boundary of $D_t$ is $S_t.$ The definition of $D_t$ is less direct if we drop the assumption that $\partial D_0$ is connected. We proceed as follows.
Let $S_t^j,  1 \le j \le m,$ be the connected components of $S_t.$  Denote by $U_t^j$  the bounded connected component of  the complement of $S_t^j$ in $\Rn.$  Among the $U_t^j$ there is one, say $U_t^1,$ that contains all the others.  This is so at time $t=0$ because $D_0$ is connected and this property is preserved by the flow $X(\cdot,t).$ We set 
$D_t= U_t^1 \setminus (\cup_{j=2}^m \overline{U}_t^j),$ so that the boundary of $D_t$ is $S_t.$

Indeed, as the reader may have noticed, it is not necessary to assume that $D_0$ is connected in our Theorem. It can be any bounded open set with $C^{1+\gamma}$ boundary. Then the argument we have just described is applied to each connected component.

Define a velocity field by
\begin{equation}\label{velocity}
v(x,t)= \left(k \star \chi_{D_t} \right)(x), \quad x \in \Rn,  \quad t \in (-T,T).
\end{equation}
Since $D_t$ has boundary of class $C^{1+\gamma},$ the field $v(\cdot,t)$ is Lipschitz for each $t\in (-T,T)$ and
the equation of the flow \eqref{flowmap} has a unique solution which is a bilipschitz mapping of $\Rn$ onto itself whose restriction to 
$\partial D_0$ is the solution of the CDE we were given. Thus $X(D_0,t) =D_t $ and $\chi_{D_t}$ is a weak solution of the non-linear transport equation \eqref{transL}.

\subsection{The local theorem.}  As a first step we solve the CDE locally in time. For this we look at the CDE as an ODE in the open set $\Omega$ of the Banach space $C^{1+\gamma}(\partial D_0, \Rn).$ To show local existence and uniqueness we apply the Picard theorem. First one has to check that $F(X) \in C^{1+\gamma}(\partial D_0, \Rn)$ for each $X \in \Omega.$ After taking a derivative in $\alpha$ in \eqref{efam} one gets a $\operatorname{p.v.}$ integral on $\partial D_0,$ 
which defines a Calder\'on-Zygmund operator (not of convolution type) with respect to the underlying measure $d\sigma_{\partial D_0},$  acting on a function satisfying a H\"older condition of order $\gamma.$ The result is again a H\"older function of the same order, since one shows that Calder\'on-Zygmund operators of the type one gets preserve H\"older spaces.  In a second step one needs to prove that
$F(X)$ is locally a Lipschitz function of the variable $X$ or, equivalently, that the differential $DF(X)$ of $F$ at the point $X \in \Omega$  is locally bounded in $X.$  Again one has to estimate operators of Calder\'on-Zygmund  type with respect H\"older spaces of order $\gamma.$ These estimates, subtle at some points, are proved in full detail  in \cite{BGLV} for the kernel $k=-\nabla N.$  The variations needed to cover the present situation are minor and are left to the reader. It is important that, as in \cite{BGLV}, the time interval on which the local solution exists depends continuously only on
the dimension $n,$ the kernel $k$, the diameter of $D_0,$ the $n-1$ dimensional surface measure of $\partial D_0$ and the constant  $q(D_0)$ determining the $C^{1+\gamma}$ character of $\partial D_0,$  whose definition we discuss below. 

Let $D$ be a bounded domain with boundary of class $C^{1+\gamma}.$ Then there exists a defining function of class $C^{1+\gamma}$, that is, a function $\varphi \in C^{1+\gamma}(\Rn),$ such that $D=\{x \in \Rn : 	\varphi(x) < 0 \}$ and $\nabla \varphi(x)\neq 0$ if $\varphi(x)=0.$  We set
\begin{equation}\label{kiu}
q(D)= \inf \{\frac{\| \nabla \varphi\|_{\gamma, \partial D}}{|\nabla \varphi|_{\inf}} :  \varphi \; \text{a defining function of $D$ of class}\; C^{1+\gamma} \},
\end{equation}
where $|\nabla \varphi(x)| = \sqrt {\sum_{j=1}^n \partial_j \varphi(x)^2},$
\begin{equation*}
\begin{split}
\| \nabla \varphi\|_{\gamma, \partial D} &= \sup \{ \frac{|\nabla \varphi(x) -\nabla \varphi(y)|}{|x-y|^\gamma}: x, y \in \partial D, x\neq y \},\\
|\nabla \varphi|_{\inf}&= \inf \{|\nabla \varphi(x)| : \varphi(x)=0\}.
\end{split}
\end{equation*}

There is here an important variation with respect to \cite{BC} and  \cite{BGLV}: the H\"older seminorm of order $\gamma$  of $\nabla \varphi$ is taken  in those papers in the whole of $\Rn.$ For reasons that will become clear later on we need to restrict our attention to the boundary of $D$ and this requires finer estimates.

\subsection{Global existence: a priori estimates.}  Assume that the maximal time of existence for the solution $X(\cdot,t)$ of the CDE is
$T.$  By this we mean that $X(\cdot,t)$ is defined for $t \in (-T,T)$ but cannot be extended to a larger interval. We want to prove that $T=\infty.$ For that it suffices to prove that for some constant $C=C(T)$ one has
\begin{equation}\label{apest}
\operatorname{diam}(D_t) + \sigma(\partial D_t)+ q(D_t) \le C, \quad t \in (-T,T).
\end{equation}
If the preceding inequality holds, then we take $t_0 < T$ close enough to $T$ so that after the application of the existence and uniqueness theorem for the CDE to the domain $D_{t_0}$ at time $t_0$ we get an interval of existence for the solution which goes beyond $T$ (the same argument applies to the lower extreme $-T$).

To obtain \eqref{apest} we look for a priori estimates in terms of $\|\nabla v \|_\infty.$  For $\operatorname{diam}(D_t)$ and  $\operatorname{\sigma}(\partial D_t)$ this is straightforward in view of \eqref{Xbi}. The core of the paper is the
a priori estimate of $q(D_t),$  which we get by constructing an appropriate defining  function $\Phi(\cdot,t)$ for $D_t$ satisfying 
\begin{align}
|\nabla \Phi(\cdot,t)|_{\inf} \ge |\nabla \varphi_0|_{\inf} \, \exp \left(-C_n\, \int_0^t \|\nabla v(\cdot,s)\|_{\infty} \, ds \right), \quad t>0,\label{ap1}\\
\|\nabla \Phi(\cdot,t)\|_{\gamma, \partial D_t} \le \|\nabla
\varphi_0 \|_{\gamma, \partial D_0} \,\exp \left(C_n\, \int_0^t (1+\|\nabla v(\cdot,s)\|_\infty)\,ds   \right), \quad t>0.\label{ap2}
\end{align}

As it was pointed out in \cite{BGLV} if one transports a defining function $\varphi_0$ of $D_0$ by $\varphi_t= \varphi_0 \circ X^{-1}(\cdot,t),$ then $\nabla \varphi_t$ may have jumps at the boundary of $D_t$  for $t\neq 0$ and so  $\varphi_t$ is not necessarily differentiable. In \cite{BGLV} one shows that
\begin{align}\label{limdins}
\lim_{D_{t}\ni y\to x}\nabla \varphi (y,t)= \lim_{D_{t}\ni y\to x}\det \nabla X^{-1} (y,t) \, \frac{|\nabla \varphi_{0}(X^{-1}(x,t))|}{\det D(x)}\,\vec{n}(x),\quad x\in \partial D_{t} \\
\lim_{\mathbb{R}^{n} \setminus \overline{D}_{t}
\ni y\to x}\nabla \varphi (y,t)= \lim_{\mathbb{R}^{n} \setminus  \overline{D}_{t}
\ni y\to x}\det  \nabla X^{-1} (y,t) \, \frac{|\nabla \varphi_{0}(X^{-1}(x,t))|}{\det D(x)}\,\vec{n}(x), 
\quad x\in \partial D_{t},\label{limfora}
\end{align}
where $X^{-1}(\cdot,t)$ is the inverse mapping of $X(\cdot,t)$ and $D(x)$ is the differential at $x$ of the restriction of $X^{-1}(\cdot,t)$ to $\partial D_t,$ as a differentiable mapping from $\partial D_t$ onto $\partial D_0.$ Define
\begin{equation}\label{deffun}
\Phi (x,t)=\begin{cases}
0, &x \in \partial D_{t},\\
\det \nabla X \big(X^{-1}(x,t),t \big)\;\varphi(x,t), &x\notin \partial D_{t}.
\end{cases}
\end{equation}
We show in section \ref{def} that $\Phi(x,t)$ is a defining function of $D_t$ of class $C^{1+\gamma}.$

The definition of $\Phi$ yields a formula for its material derivative $\frac{D}{Dt}=\partial_t + v \cdot \nabla,$  namely,
\begin{equation}\label{matd}
\frac{D \Phi}{Dt}=\operatorname{div} (v) \, \Phi.
\end{equation}
Taking gradient in the preceding identity one gets 
\begin{equation}\label{matdgrad1}
\frac{D (\nabla \Phi)}{Dt}= \nabla \big(\operatorname{div} (v)\big) \, \Phi+ \operatorname{div}(v) \nabla \Phi - (\nabla v)^t (\nabla \Phi),
\end{equation}
where $(\nabla v)^t$ stands for the transpose of the matrix $\nabla v.$  The right hand side of \eqref{matdgrad1} can be split into two terms which behave differently. The first is $ \nabla \big(\operatorname{div} (v)\big) \, \Phi$ and the second  $\operatorname{div}(v) \nabla \Phi - (\nabla v)^t (\nabla \Phi).$   We prove that the second term is a finite sum of differences of commutators, which can be shown, with some effort, to have the right estimates. The first term does not combine with others to yield a commutator and because of that we call it the solitary term. A priori it is the most singular term on the right hand side of \eqref{matdgrad1}, since it contains second order derivatives of $v.$  We show that  the solitary term extends continuously to $\partial D_t$ by $0$ and so it can be ignored at the price of working only on the boundary of $D_t$ for all $t.$ 

To prove that the solitary term extends continuously to the boundary by $0$ we need a recent result of Vasin \cite{V} whose statement is as follows. Let $T$ be a convolution homogeneous even Calder\'on-Zygmund operator of the type
\begin{equation}\label{cz}
T(f)(x) = \operatorname{p.v.} \int_{\R^n} L(x-y) f(y) \,dy = \lim_{\ep \to 0} \int _{|y-x|>\ep} L(x-y) f(y) \,dy, 
\end{equation}
where $L$ is an even kernel, homogeneous of degree $-n,$  satisfying the smoothness condition $L \in C^1(\R^n\setminus \{0\})$ and the cancellation property $\int_{|x|=1} L(x) \,d\sigma(x)=0.$  The function $f$ is in $L^p(\R^n), \; 1\le p< \infty$ and the principal value integral \eqref{cz} is defined a.e. on $\R^n.$ Vasin's result states that if $D$ is a bounded domain with boundary of class $C^{1+\gamma}$ then
\begin{equation}\label{va}
\left|\nabla T(\chi_D)(x)\right| \, \operatorname{dist}(x,\partial D)^{1-\gamma} \le C,  \quad x \in D\cup \left(\R^n\setminus \overline{D}\right),
\end{equation}
where the constant $C$ depends only on $n, \gamma$ and the constants giving the smoothness of $\partial D.$ We provide a proof of \eqref{va} in section \ref{vasin} for completeness.

One applies \eqref{va} to the second derivatives of the velocity field $v=k\star \chi_{D_t}$ with $t$ fixed. One has in the distributions sense
\begin{equation}\label{derker}
\partial_j k =   \operatorname{p.v.} \partial_j k+ \vec{c_j} \, \delta_0, 	\quad \text{with}\quad \vec{c_j} =\int_{|\xi|=1} k(\xi) \xi_j \, d\sigma(\xi),
\end{equation}
and so
\begin{equation*}\label{}
(\partial_j v) (x)=   (\operatorname{p.v.} \partial_j k \star \chi_{D_{t}})(x)+ \vec{c_j}  \,\chi_{D_{t}}(x),  \quad x \in D_t\cup \left(\R^n\setminus \overline{D_t}\right)
\end{equation*}
and, taking a second derivative,
\begin{equation}\label{secder}
(\partial_l\partial_j v) (x)=  \partial_l  (\operatorname{p.v.} \partial_j k \star \chi_{D_{t}})(x),  \quad x \in D_t\cup \left(\R^n\setminus \overline{D_t}\right), \quad 1\le l, j \le n.
\end{equation}
By \eqref{va} applied to the operator $T$ associated with the kernel $L=\partial_j k $ 
\begin{equation}\label{disbou}
\left| (\partial_l \partial_j v) (x)\right|  	\operatorname{dist}(x,\partial D_t)^{1-\gamma} \le C(t),  \quad x \in D_t\cup \left(\R^n\setminus \overline{D_t}\right), \quad  1\le l, j \le n,
\end{equation}
where $C(t)$ depends on $n, \gamma$ and the constants related to the smoothness of $\partial D_t.$ This implies that the solitary term  has limit $0$ at the boundary of $\partial D_t,$ coming from the complement, because $|\Phi(x,t)|$ is comparable to $\operatorname{dist}(x,\partial D_t)$ as $x$ approaches $\partial D_t $ ($\Phi(\cdot,t)$ is continuously differentiable and vanishes on the boundary but the gradient does not).

It is worth remarking that if each component of the kernel $k$ is harmonic off the origin, then \eqref{va} can be obtained readily from the fact that 
$T(\chi_D)$ satisfies a H\"older condition of order $\gamma$ in $D,$  which is the Main Lemma of \cite{MOV}.

From \eqref{matdgrad1} at boundary points, and thus without the solitary term, one gets straightforwardly \eqref{ap1}. 
Thus the a priori estimate of $q(D_t)$ is reduced to \eqref{ap2}.

We turn now our attention to the second term in \eqref{matdgrad1}. 
We prove that the $i$-th component of the vector $\operatorname{div}(v) \nabla \Phi - (\nabla v)^t (\nabla \Phi)$ evaluated at the point $x \in \R^n$ is a sum of $n-1$ terms, each of which is a difference of two commutators. In fact, the $i$-th component is
\begin{equation}\label{commind}
\sum_{j\neq i}\operatorname{p.v.} \int_{D_t} \partial_j k_j(x-y) \left(\partial_i \Phi(x)-  \partial_i \Phi(y)\right)\, dy -
\operatorname{p.v.} \int_{D_t} \partial_i k_j(x-y) \left(\partial_j \Phi(x)-  \partial_j \Phi(y)\right)\, dy.
\end{equation}
 It is crucial here that we obtain differences of commutators, which provides eventually an extra cancellation.

In \cite{BC}  it was shown that the H\"older semi-norm of order $\gamma$ of each commutator in \eqref{commind} can be estimated by $C_n \,(1+\|\nabla v(\cdot,t)\|_\infty) \|\nabla \Phi(\cdot,t)\|_{\gamma, \R^n}.$   This is not enough in our situation, because of the presence of the factor $\|\nabla \Phi(\cdot,t)\|_{\gamma, \R^n},$ which should be replaced by a boundary quantity like $\|\nabla \Phi(\cdot,t)\|_{\gamma, \partial D_t}.$

 To obtain the correct estimate we transform the $j$-the term in \eqref{commind} into a difference of two boundary commutators:
\begin{equation}\label{commfr}
\begin{split}
\operatorname{p.v.} \int_{\partial D_t}  k_j(x-y) & \left(\partial_i \Phi(x) - \partial_i \Phi(y)\right)  n_j(y)\, d\sigma(y)  \\*[5pt] 
&-\operatorname{p.v.} \int_{\partial D_t} k_j(x-y) \left(\partial_j \Phi(x)- \partial_j \Phi(y)\right) n_i(y)\, d\sigma(y).
\end{split}
\end{equation}
It is worth emphasizing here that it is not true that the commutator
\begin{equation*}\label{}
\operatorname{p.v.} \int_{D_t} \partial_j k_j(x-y) \left(\partial_i \Phi(x)-  \partial_i \Phi(y)\right)\, dy
\end{equation*}
equals
\begin{equation*}\label{}
\operatorname{p.v.} \int_{\partial D_t}  k_j(x-y)  \left(\partial_i \Phi(x) - \partial_i \Phi(y)\right)  n_j(y) \, d\sigma(y).
\end{equation*}
What is true is that the difference of two commutators in the $j$-th term of  \eqref{commind} equals the difference of two commutators in  \eqref{commfr}.  There is some magic here in arranging all terms so that certain hidden cancellation takes place. To get the right estimates on the boundary commutators one cannot adapt \cite[Lemma 7.3, p.26]{BC} to the underlying measure $ d \sigma$ on $\partial D_t,$ because this would give a constant of the type
\begin{equation*}\label{growt}
  C_t = \sup_{x\in\partial D_t }\sup_{r>0} \frac{\sigma (B(x,r))}{ r^{n-1}},
\end{equation*}
which can be estimated by the Lipschitz constant of $X(\cdot,t),$ namely, $\exp \int_0^t \|\nabla v(\cdot,s)\|_\infty \,ds.$  This exponential constant is by far too large.

One needs to replace the standard bound  $C_n\, (1+\|\nabla v(\cdot,t)\|_\infty \|) \nabla \Phi(\cdot,t)\|_{\gamma, \R^n}$ for a ``solid" commutator of the type \eqref{commind} by $C_n \,(1+\|\nabla v(\cdot,t)\|_\infty ) \|\nabla \Phi(\cdot,t)\|_{\gamma, \partial D_t}.$  Here we have used the term solid commutator to indicate that the integration is on $D_t$ with respect to $n$-dimensional Lebesgue measure as opposed to a boundary commutator in which the integration is on the boundary of $\partial D_t$ with respect to surface measure $\sigma.$
To get the estimate in terms of $\|\nabla \Phi(\cdot,t)\|_{\gamma, \partial D_t}$  we resort to the difference of commutators structure,  which allows us to appeal to Whitney's extension theorem, the reason being that one can switch between a difference of boundary commutators and a difference of solid commutators via the divergence theorem. The final outcome is \eqref{ap2}.

Of course for those cases in which the kernel is divergence free, the quasi-geostrophic equation in particular, one does not need the boundary commutators and getting the commutator formula \eqref{commind} suffices to complete the proof as in \cite{BC}. Indeed in these cases the transported defining function is already a genuine defining function, since the gradient has no jump according \eqref{limdins} and \eqref{limfora}, or appealing to a regularization argument, as in \cite{R}.

To complete the proof from the a priori estimates  is a standard reasoning. One needs a logarithmic inequality for $\|\nabla
v(\cdot,t)\|_\infty,$ which is a consequence of the boundedness of $T(\chi_{D})$ for an even smooth convolution Calder\'on-Zygmund operator $T$ and a domain $D$ with boundary of class $C^{1+\gamma},$ and of the particular form of the constant. One obtains

\begin{equation}\label{login}
\begin{split}
||\nabla v(\cdot,t)||_{\infty} & \leq \frac{C_n}  {\gamma}  \left(1 + \log^+
\left(|D_t|^{1/n} \frac{||\nabla \Phi||_{\gamma, \partial D_t}}  {|\nabla \Phi|_{\rm
{inf}}} \right)\right),
\end{split}
\end{equation}
where $C_n$ is a dimensional constant and $|D|$ stands for the $n$-dimensional Lebesgue measure of the measurable set $D.$  The novelty in inequality \eqref{login} is that $||\nabla \Phi||_{\gamma, \partial D_t}$  is now replacing the larger constant $||\nabla \Phi||_{\gamma, \R^n}$ which appears in \cite{BC} or \cite[Corollary 6.3]{BGLV} in dealing with the corresponding inequality. This follows from a scrutiny of the constants that appear along the proof and an application of the implicit differentiation formula.

Inserting \eqref{ap1} and \eqref{ap2}  in \eqref{login} one gets, for a dimensional constant $C,$
\begin{equation*}\label{gradvuniform}
||\nabla v (\cdot,t)||_{\infty} \le C + C \int_0^t (1+||\nabla v
(\cdot,s)||_{\infty}) \, ds,
\end{equation*}
which yields, by Gronwall,
\begin{equation*}\label{expgradv}
||\nabla v (x,t)||_{\infty} \le C \,e^{C t}, \quad -T< t< T,
\end{equation*}
and this completes the proof of \eqref{apest}.

The reader may have observed that it is not strictly necessary for the proof to use the quantity $q(D_t),$ defined in \eqref{kiu}. Nevertheless, it is the canonical quantity to take into consideration and helps to make some statements clearer. We will use it again in section \ref{def}.

\section{An auxiliary result}\label{vasin}
The result we are referring to is the following and can be found in \cite{V}.

\begin{lemma*}\label{vasin}
Let $D 	\subset \mathbb{R}^n$ be a bounded domain with boundary of class $C^{1+\gamma}, \; 0<\gamma<1,$ and $L$ an even kernel in $C^1(\R^n \setminus {0}), $ homogeneous of degree $-n.$ Then
\begin{equation*}\label{vasin2}
|\nabla \left(L\star \chi_D\right) (x) | \operatorname{dist}(x,\partial D)^{1-\gamma}  \le C, \quad x \in \R^n \setminus \partial D,
\end{equation*}
where $C$ is a constant depending only on $D.$
\end{lemma*}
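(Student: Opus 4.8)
The plan is to estimate $\nabla(L\star\chi_D)(x)$ at a point $x\in\R^n\setminus\partial D$ by comparing the singular integral against a reference configuration for which the integral is known exactly, namely a half-space whose boundary is the tangent plane to $\partial D$ at the nearest boundary point. Write $\delta=\operatorname{dist}(x,\partial D)$, pick $p\in\partial D$ with $|x-p|=\delta$, and let $H$ be the half-space through $p$ with the same outer normal $\vec n(p)$, on the appropriate side. Since $L$ is even and homogeneous of degree $-n$ with mean zero on the sphere, $L\star\chi_H$ is constant on each side of $\partial H$ (this is the standard fact that an even smooth Calder\'on--Zygmund operator applied to a half-space indicator is piecewise constant), hence $\nabla(L\star\chi_H)\equiv 0$ away from $\partial H$. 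Therefore it suffices to bound
\begin{equation*}
\nabla(L\star\chi_D)(x)=\nabla(L\star(\chi_D-\chi_H))(x)=\operatorname{p.v.}\int \nabla L(x-y)\,(\chi_D(y)-\chi_H(y))\,dy,
\end{equation*}
where $\nabla L$ is homogeneous of degree $-(n+1)$ and smooth off the origin; note no principal value is actually needed once we subtract $\chi_H$, because $\chi_D-\chi_H$ vanishes in a neighbourhood of $p$ of size comparable to $\delta$ — more precisely, the $C^{1+\gamma}$ regularity of $\partial D$ gives $|\chi_D(y)-\chi_H(y)|\le 1$ supported where $\operatorname{dist}(y,\partial H)\lesssim q(D)\,|y-p|^{1+\gamma}$, and in particular the symmetric difference avoids the ball $B(p,c\delta)$ only up to a controlled error.

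The key estimate is then a decomposition of the integral into dyadic annuli $A_k=\{2^k\delta\le |y-x|<2^{k+1}\delta\}$ for $k\ge 0$, together with the region $|y-x|<\delta$. On $A_k$ one has $|\nabla L(x-y)|\lesssim (2^k\delta)^{-(n+1)}$, while $|D\,\triangle\,H|\cap A_k$ has Lebesgue measure $\lesssim (2^k\delta)^{n-1}\cdot q(D)(2^k\delta)^{1+\gamma}=q(D)(2^k\delta)^{n+\gamma}$, using that $\partial D$ lies within vertical distance $q(D)\,r^{1+\gamma}$ of $\partial H$ over a horizontal ball of radius $r=2^{k+1}\delta$ around $p$ (valid while this is $\lesssim\operatorname{diam}(D)$; the finitely many larger scales are trivially bounded since $|D\triangle H\cap A_k|\le |D|+$ the part of $H$ in $A_k$, and there $|y-x|\ge c\,\operatorname{diam}(D)$). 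Summing,
\begin{equation*}
\sum_{k\ge 0}(2^k\delta)^{-(n+1)}\,q(D)\,(2^k\delta)^{n+\gamma}=q(D)\,\delta^{\gamma-1}\sum_{k\ge 0}2^{k(\gamma-1)}\lesssim \frac{q(D)}{1-\gamma}\,\delta^{\gamma-1},
\end{equation*}
since $\gamma-1<0$ the geometric series converges. The innermost region $|y-x|<\delta$ contributes nothing from $\chi_H$ part when $x$ is on the $H$-side at distance $\delta$ (that ball lies inside $H$), and the $\chi_D$ part there again only sees $D\triangle H$, whose intersection with $B(x,\delta)$ has measure $\lesssim q(D)\delta^{n+\gamma}$ and $|\nabla L|\lesssim$ ... here one must be slightly careful, integrating $|y-x|^{-(n+1)}$ over a set of small measure concentrated near $\partial D$; one splits further into shells around $p$ and gets the same $\delta^{\gamma-1}$ bound. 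Multiplying through by $\delta^{1-\gamma}$ gives the claimed uniform bound, with $C$ depending on $D$ through $q(D)$, $\operatorname{diam}(D)$, $|D|$ and the Calder\'on--Zygmund bounds for the kernel $L$.

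The main obstacle I anticipate is the bookkeeping near the inner scale: when $x$ is very close to $\partial D$, the principal value must be handled honestly because $\nabla L$ is not locally integrable at the origin, and one cannot simply say the symmetric difference avoids a ball around $x$ — it does not, since $x$ itself may lie within $\delta$ of $\partial D$ on the ``wrong'' side for some directions. The clean way around this is to never differentiate under the $\operatorname{p.v.}$ sign against $\chi_D$ directly but rather against $\chi_D-\chi_H$, and to exploit the cancellation $\int_{|x-y|=r}\nabla L(x-y)\,d\sigma(y)=0$ for each $r$ (odd integrand, as $\nabla L$ is odd) to subtract a constant from $\chi_D-\chi_H$ on each sphere before estimating; this renders the inner integral absolutely convergent with the $q(D)\,r^\gamma$ modulus of continuity doing the work, exactly as in the Main Lemma of \cite{MOV}. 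A secondary point worth care is justifying that $L\star\chi_H$ is genuinely constant off $\partial H$: this follows by a rotation reducing to $\vec n=e_n$, then a scaling argument showing the value depends only on $\operatorname{sign}(x_n)$, combined with the mean-zero hypothesis on $L$. Once these two points are dispatched the rest is the routine dyadic summation above.
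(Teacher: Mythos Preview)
Your approach is essentially the paper's: compare with the tangent half-space $H$ at the nearest boundary point and estimate the integral of $|\nabla L|$ over the symmetric difference $D\triangle H$, using that its thickness at scale $r$ is $O(r^{1+\gamma})$. The paper organizes this slightly differently---it first applies the divergence theorem to get the clean formula $\nabla(L\star\chi_D)(x)=\int_{D\setminus B(x,\delta)}\nabla L(x-y)\,dy$, then subtracts the vanishing half-space integral and splits into a near part $|y|<2\delta$ and a far part---but the core estimate is the same dyadic bound you write down.

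One clarification worth making: the ``main obstacle'' you anticipate at the inner scale is not an obstacle at all. Since $\delta=\operatorname{dist}(x,\partial D)$ and $\partial H$ is the tangent hyperplane at the nearest point $p$ (so that $x-p$ is normal to $\partial H$ and $|x-p|=\delta$), the open ball $B(x,\delta)$ lies entirely in $D\cap H$ when $x\in D$ (and in $D^c\cap H^c$ when $x\notin\overline D$). Hence $\chi_D-\chi_H\equiv 0$ on $B(x,\delta)$ \emph{exactly}, not merely ``up to a controlled error'', and no principal value is needed after subtracting $\chi_H$. Your dyadic sum from $k=0$ is therefore already the whole integral. The paper's divergence-theorem step makes this issue disappear automatically, which is why its write-up is cleaner at this point.
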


\begin{proof}
Placing the gradient on the characteristic function of $D$ we obtain
\begin{equation*}\label{}
\nabla \left(L\star \chi_D\right) = L\star (-\vec{n} \,d\sigma_{\partial D}).
\end{equation*}
Fix $x\in D$ and set $d=d(x)=\text{dist}(x,\partial D).$ By the divergence theorem
\begin{equation*}\label{}
(L\star \vec{n} \,d\sigma_{\partial D})(x) = (L\star \vec{n} \,d\sigma_{\partial B(x,d)})(x)-\int_{D\setminus B(x,d)} \nabla L(x-y) \,dy,
\end{equation*}
Now
\begin{equation*}\label{}
(L\star \vec{n} \,d\sigma_{\partial B(x,d)})(x)=\int_{|y-x|=d} L(x-y) \vec{n}(y) \,d\sigma(y)=\int_{|z|=d} L(z) \vec{n}(z) \,d\sigma(z)
\end{equation*}
and the last integral clearly vanishes, owing to the oddness of $L(z) \vec{n}(z).$ Thus
\begin{equation*}\label{}
\nabla \left(L\star \chi_D\right)(x)= -(L\star \vec{n} \,d\sigma_{\partial D})(x) = \int_{D\setminus B(x,d)} \nabla L(x-y) \,dy,
\end{equation*}
and
\begin{equation*}\label{}
\begin{split}
\text{dist}(x,\partial D)\, |\nabla \left(L\star \chi_D\right) (x) | & = d\, \left|\int_{D\setminus B(x,d)} \nabla L(x-y) \,dy \right| \\*[5pt]
& \le d \int_{D\setminus B(x,d)} \frac{C}{|y-x|^{n+1}} \,dy   \le C.
\end{split}
\end{equation*}
Therefore in proving the lemma one can assume that $d \le \frac{1}{2} r_0,$  where $r_0=r_0(D)$ has the property that, given a point $p$ in the boundary of $D,$ 
$B(p, 2r_0) \cap D$ is the set of points in $B(p,2r_0)$ lying below the graph of a $C^{1+\gamma}$ function defined on the tangent hyperplane through $p.$

We assume, without loss of generality, that $0$ is the closest point of $\partial D$ to $x$ and that the tangent hyperplane to $\partial D$ at $0$ is $\{x 	\in \R^n: x_n=0\}.$ We also assume that $D\cap B(0,2r_0)=\{x\in \R^n : x_n < \varphi(x') \},$ where $x'=(x_1, \dots, x_{n-1}),$
$\varphi \in C^{1+\gamma}(B'(0,2r_0)),$ $ B'(0,2r_0) =\{x'\in \R^{n-1}: |x'|< 2r_0\}.$ In particular,
\begin{equation*}\label{}
|\varphi(x')| \le \|\nabla \varphi \|_{\gamma, B'(0,2r_0)} |x'|^{1+\gamma}, \quad x' \in B'(0,2r_0).
\end{equation*}

We clearly have

\begin{equation*}\label{}
\int_{D\setminus B(x,d)} \nabla L(x-y) \,dy  = \int_{(D\setminus B(x,d))\cap B(0,r_0)} \nabla L(x-y) \,dy + \int_{D\cap B^c(0,r_0)} \nabla L(x-y) \,dy.
\end{equation*}
The second term above is easy to estimate:
\begin{equation*}\label{}
\left|\int_{D\cap B^c(0,r_0)} \nabla L(x-y) \,dy \right| \le \int_{B^c(0,r_0)} \frac{dy}{|y|^{n+1}} \,dy \le  \frac{C}{r_0}.
\end{equation*}
For the first term one uses the fact that if $H$ is a halfspace then $L\star \chi_{H}$ vanishes on $H.$ This follows from the fact that the preceding statement is true for balls instead of halfspaces \cite{MOV} and a straightforward limiting argument. Then one has
\begin{equation*}\label{}
\begin{split}
\int_{(D\setminus B(x,d))\cap B(0,r_0)} \nabla L(x-y) \,dy &= \int_{(D\setminus B(x,d))\cap B(0,r_0)} \nabla L(x-y) \,dy -\int_{H_{-}} \nabla L(x-y) \,dy \\*[5pt]
& = \int_{(D\setminus H_{-}) \cap B(0,r_0)} \nabla L(x-y) \,dy \\*[5pt] & -\int_{(H_{-}\setminus (D \cup B(x,d)))\cap B(0,r_0)} \nabla L(x-y) \,dy \\*[5pt]
& - \int_{H_{-}\cap B^c(0,r_0)} \nabla L(x-y) \,dy 
\end{split}
\end{equation*}
and the last term estimated as we did above with $D$ in place of $H_{-}.$ The remaining two terms are tangential and they are treated similarly. For the first we set
\begin{equation*}\label{}
\begin{split}
 \int_{(D\setminus H_{-}) \cap B(0,r_0)} \nabla L(x-y) \,dy &=  \int_{(D\setminus H_{-}) \cap B(0,2d)} \nabla L(x-y) \,dy \\*[5pt]
 & + \int_{(D\setminus H_{-}) \cap \left(B(0,r_0)\setminus B(0,2d)\right)} \nabla L(x-y) \,dy.
 \end{split}
\end{equation*}
Since for $x \in D \setminus H_{-}$ one has $|y-x| \ge d,$ we get
\begin{equation*}\label{}
\begin{split}
 \left|\int_{(D\setminus H_{-}) \cap B(0,2d)} \nabla L(x-y) \,dy \right| & \le \frac{C}{d^{n+1}} \left|  (D\setminus H_{-}) \cap B(0,2d) \right|   \\*[5pt]
 & \le  \frac{C}{d^{n+1}}  \int_0^{2d} \rho^{n-1}\, \sigma\{\theta \in S^{n-1} : \rho \theta \in D\setminus H_{-} \} \,d\rho\\*[5pt]
 & \le  \frac{C}{d^{n+1}}  \int_0^{2d} \rho^{n-1+\gamma} \ \,d\rho = C \,d^{\gamma -1}.
 \end{split}
\end{equation*}
Finally
\begin{equation*}\label{}
\begin{split}
 \left|\int_{(D\setminus H_{-}) \cap (B(0,r_0)\setminus B(0,2d))} \nabla L(x-y) \,dy \right| & \le C\, \int_{(D\setminus H_{-}) \cap (B(0,r_0)\setminus B(0,2d))} \frac{1}{|y|^{n+1}} \,dy \\*[5pt]
 & \le  C\, \int_0^{2d} \frac{1}{\rho^{n+1}} \rho^{n-1+\gamma}  \,d\rho  = C \,d^{\gamma -1}.
 \end{split}
\end{equation*}
\end{proof}

It is an interesting fact that the preceding lemma implies the main lemma in \cite{MOV}, which states that under the hypothesis of Vasin's lemma the function $L\star \chi_{D}$ satisfies a H\"older condition of order $\gamma$ on $D$ and on $\R^n \setminus \overline{D}.$
Incidentally, it is worth mentioning that this result has been proved independently by various authors at different times and with various degrees of generality. We are grateful to M. Lanza de Cristoforis for bringing to our attention the oldest reference we are aware of, namely, the 1965 paper of Carlo Miranda \cite{Mi}.

We give an account of the proof of this fact only for the statement concerning $D.$  In the exterior of $D$ one applies similar arguments.

Take two points $x$ and $y$ in $D.$ Let $d=\operatorname{dist}(x,\partial D)$ be the distance from $x$ to the boundary.  As before, we assume, without loss of generality, that $0$ is the closest point of $\partial D$ to $x$ and that the tangent hyperplane to $\partial D$ at $0$ is $\{x \in \R^n: x_n=0\}.$   We can also assume that $D\cap B(0,2r_0)=\{x\in \R^n :   x_n <\varphi(x')  \},$ where $x'=(x_1, \dots, x_{n-1}),$
$\varphi \in C^{1+\gamma}(B'(0,2r_0)),$ $ B'(0,2r_0) =\{x'\in \R^{n-1}: |x'|< 2r_0\}.$  Then
\begin{equation}\label{graf}
|\varphi(x')| \le \|\nabla \varphi \|_{\gamma, B'(0,2r_0)} |x'|^{1+\gamma}, \quad x' \in B'(0,2r_0).
\end{equation}

As in \cite{MOV} we can reduce matters to the case in which $d \le \frac{1}{2} r_0,$  because otherwise we resort to the smoothness of $L\star \chi_{D}$ on the domain $\{z \in D: \operatorname{dist}(z,\partial D) >  \frac{1}{2} r_0\}.$ 

Let $K$ be the closed cone with aperture $45^{\circ}$ and axis the negative $x_n$ axis. That is
$$K=\{x\in \R^n: - \sqrt{2 } \,x_n \ge |x|\}.$$ We say that $x$ and $y$ are in non-tangential position if $x,y \in K.$ Otherwise they are in tangential position.

 Assume first that $x,y \in D$ are in non-tangential position and distinguish two cases. The first is $y \in B(0,2d)\setminus B(0,d).$ Apply the mean value theorem on an arc contained in $K\cap\big(B(0,2d)\setminus B(0,d)\big)$ of length comparable to $|y-x|.$ One gets
\begin{equation}\label{inc}
|f(y)-f(x)| \le C\, \sup \{\operatorname{dist}(\xi,\partial D)^{\gamma-1}: \xi \in K\cap \big(B(0,2d)\setminus B(0,d)\big) \} |y-x|.
\end{equation}
We claim that there exists an absolute constant $c_0$ with $0<c_0 < 1$ satisfying
\begin{equation}\label{dis}
\operatorname{dist}(\xi,\partial D) \ge c_0\, |\xi_n|, \quad \xi \in K \cap B(0,r_0),
\end{equation}
 provided $r_0$ is small enough.
Let $p \in \partial D$ be such that $|\xi-p|=\operatorname{dist}(\xi,\partial D).$ Since $p=(p',p_n)$ is on the graph of $\varphi$ we have, by \eqref{graf}, $|p_n| \le C\, |p'|^{1+\ep} \le C\,r_0^\ep\, |p'|.$ Thus
\begin{equation*}\label{}
\begin{split}
|\xi_n| & \le |\xi_n -p_n| + |p_n| \le  |\xi -p| +C\,r_0^\ep\, |p'|  \\*[5pt]
& \le  |\xi -p| +C\,r_0^\ep\, \big(|p'-\xi'| +|\xi'|\big) \\*[5pt]
&\le |\xi -p| \big(1+C\,r_0^\ep \big)+C\,r_0^\ep\, |\xi_n|,
\end{split}
\end{equation*}
where in the last inequality we used that $|\xi| \le \sqrt{2}|\xi_n|, \; \xi \in K.$ Taking $r_0$ so small that $C\,r_0^\ep \le 1/2$ we obtain
$$|\xi_n|  \le 2 \big( 1+C\,r_0^\ep\big)|\xi -p| =  2 \big( 1+C\,r_0^\ep\big) \operatorname{dist}(\xi,\partial D) .$$
Indeed, the constant $C$ is the previous string of inequalities is $\sqrt{2}\, \|\nabla \varphi \|_{\gamma, B'(0,2r_0)},$ which also depends on $r_0.$ But this is not an obstruction because it decreases with $r_0.$

Therefore, by \eqref{inc},
\begin{equation*}\label{}
\begin{split}
|f(y)-f(x)| & \le C\, (c_0 |\xi_n |)^{\gamma-1}  |y-x| \le C\,c_0^{\gamma-1}  d^{\gamma-1} |y-x|^{1-\gamma}  |y-x|^\gamma \\*[5pt]
& \le C\,c_0^{\gamma-1}  d^{\gamma-1} (3d)^{1-\gamma}  |y-x|^\gamma = C\, |y-x|^\gamma.
\end{split}
\end{equation*}

Let us turn our attention to the case $y \in K\cap B^c(0,2d).$ Note that there exists an absolute constant $C_0 >1$ such that 
$$|y-x| \le C_0\, |y_n-x_n|, \quad y \in K\cap B^c(0,2d).$$
Apply the fundamental theorem of Calculus on the interval with end points $x$ and $y$ and estimate the gradient of $f$ by a constant times the distance to the boundary raised to the power $\gamma-1.$  By \eqref{dis} we obtain
\begin{equation*}\label{}
\begin{split}
|f(y)-f(x)| & \le C\,  \int_0^1 \operatorname{dist}(x+t(y-x),\partial D)^{\gamma-1}  |y-x| \, dt  \\*[5pt] 
& \le C\,c_0^{\gamma-1}  \int_0^1 |x_n+t(y_n-x_n)|^{\gamma-1}  |y-x| \, dt \\*[5pt]
& = C\,  \frac{|y-x|}{|y_n-x_n|} \int_0^{|y_n-x_n|} (d+\tau)^{\gamma-1}  \, d\tau \\*[5pt] 
& = C\,C_0 \, \big((d+|y_n-x_n|)^\gamma -d^\gamma \big) \\*[5pt] 
& \le C\, |y_n-x_n|^\gamma \le C\, |y-x|^\gamma,
\end{split}
\end{equation*}
as desired.

We are left with the case in which $x$ and $y$ are in tangential position, that is, $y \in D \cap \big(\R^n \setminus K\big).$ In \cite{MOV}  there is a reduction argument to the non-tangential case, which we now reproduce for completeness. Take a point $p \in \partial D$ with 
$|y-p|=\operatorname{dist}(y,\partial D)$ and let $\vec{N}$ be the exterior unit normal vector to $\partial D$ at $p.$ We will take $r_0$ so small that $\vec{N}$ is very close to the  exterior unit normal vector $\vec{n}$  to $\partial D$ at $0.$ Then the ray
$y-t \vec{N}, \; t>0,$ will intersect $K$ at some point $y_0$ and the pairs $x, y_0$ and $y,y_0$ will be in tangential position. Let us seek a condition on $t$ so that $y-t \vec{N} \in K,$  that is, so that 
\begin{equation}\label{kai}
|y-t \vec{N}| \le \sqrt{2} |\langle y-t \vec{N}, \vec{n} \rangle|.
\end{equation}
Here $\langle \cdot,\cdot \rangle$ denotes the scalar product in $\R^n.$  Since $|\langle y-t \vec{N}, \vec{n} \rangle| \ge t \langle \vec{N}, \vec{n} \rangle -|y|$ and $|y-t \vec{N}| \le |y|+t,$ a sufficient condition for \eqref{kai} is
\begin{equation*}\label{}
(1+\sqrt{2}) |y| \le t \big( \sqrt{2} \langle \vec{N}, \vec{n} \rangle -1\big).
\end{equation*}
Take $r_0$ small enough so that $\sqrt{2} \langle \vec{N}, \vec{n} \rangle -1 \ge (\sqrt{2}-1)/2.$ A simpler sufficient condition for \eqref{kai} is 
$$ |y| \le c_0 \,t, \quad\quad \text{with} \quad\quad c_0= \frac{1}{2}\,\frac{\sqrt{2}-1}{\sqrt{2}+1}.$$
Define $t_0$ by $|y| = c_0 \,t_0$  and then set $y_0=y-t_0 \vec{N}.$ By construction, $y_0 \in K \cap D$ and the pairs $x, y_0$ and
$y, y_0$ are in non-tangential position. Hence we only have to check that 
\begin{equation}\label{con}
|y-y_0| \le C_0\, |x-y|.
\end{equation}
We have $c_0 \,|y-y_0|= c_0\,t_0 = |y|.$
On the other hand, the condition $y \notin K$ is exactly $|y|< \sqrt{2} \,|y'|$ and clearly $|x-y| \ge |y'|.$  Therefore \eqref{con} holds with an absolute constant $C_0.$

\section{The defining function for $D_t$ }\label{def}

In this section we prove that the function $f$ defined by \eqref{deffun} is a defining function of $D_t$ of class $C^{1+\gamma}.$ Our assumption now is that the CDE has a solution $X(\cdot,t)$ for $t$ in an interval $(-T;T)$ and that $D_t$ is the domain with $\partial D_t=
X(\partial D_0,t)$  which has been defined in subsection 2.1. The field defined by \eqref{velocity} has a flow map \eqref{flowmap} whose restriction to $\partial D_0$ is precisely the solution of the CDE.

Taking the gradient in \eqref{deffun} we get, for $x \notin \partial D_t,$
\begin{equation}\label{gradFi}
\nabla \Phi(x,t) = \det \nabla X (X^{-1}(x,t),t) \, \nabla \varphi(x,t) + \nabla \left(\det \nabla X (X^{-1}(x,t),t)\right) \varphi(x,t).
\end{equation}
In \cite[Section 8]{BGLV} it was shown that $\nabla X^{-1}(\cdot,t)$ satisfies a H\"older condition of order $\gamma$ on the open set
$\R^n \setminus \partial D_t$ (but may have jumps at $\partial D_t$). What remains to be proved is that $\nabla \Phi(\cdot,t)$ extends
continuously to $\partial D_t.$ This is straightforward for the first term in the right hand side of \eqref{gradFi}, just by the jump formulas
\eqref{limdins} and \eqref{limfora}. We have
\begin{equation*}\label{priter}
\lim_{\R^n \setminus \partial D_t  \ni y \to x} \det \nabla X (X^{-1}(y,t),t) \, \nabla \varphi(y,t)  = \frac{|\nabla \varphi_0(X^{-1}(x,t)) |}{\operatorname{det}D(x)} \,\vec{n}(x),
\end{equation*}
where $D(x)$ is the differential at $x \in \partial D_t$ of $X^{-1}(\cdot,t)$ viewed as a differentiable mapping from $\partial D_t$ into 
$\partial D_0.$

The second term in the right hand side of \eqref{gradFi} tends to $0$ as $x$ approaches a point in $\partial D_t.$ Proving this requires some work. For the sake of simplicity of notation let us consider positive times $t$ less than $T.$ Since $X(\cdot,t)$ is a continuously differentiable function of $t$ with values in the Banach space $C^{1+\gamma}(\partial D_0, \R^n),$ the constants $q(D_s)$ determining
the $C^{1+\gamma}$ smoothness of the boundary of $D_s$ are uniformly bounded for $0\le s \le t.$ Hence
\begin{align}\label{gradfu}
||\nabla v (\cdot,s)||_{\infty} \le C(t), \quad 0 \le s \le t,
\\ \label{gradfhol}
||\nabla v (\cdot,s)||_{\gamma, D_s} + ||\nabla v (\cdot,s)||_{\gamma, \R^n \setminus \overline{D_s}}  \le C(t), \quad 0 \le s \le t,
\end{align}
where $C(t)$ denotes here and in the sequel a positive constant depending on $t$ but not on $s \in [0,t].$
Inequality \eqref{gradfu} follows from the fact, already mentioned, that standard even convolution Calder\'on-Zygmund operators are bounded on characteristic functions of $C^{1+\gamma}$ domains with bounds controlled by the constants giving the smoothness of the domain (see, for instance, \eqref{login}). Inequality \eqref{gradfhol} has appeared in the literature several times with various degrees of generality, as we mentioned in the previous section, where a complete proof was presented.  In \cite{MOV} the reader will find another accessible proof independent of Vasin's lemma. The constants are not logarithmic, but this is not relevant here. The statement is that if $T$ is an even  smooth (of class $C^1$) convolution homogeneous Calder\'on-Zygmund operator and $D$ a domain with boundary of class $C^{1+\gamma}, \; 0<\gamma<1,$ then $T(\chi_D)$ satisfies a H\"older condition of order $\gamma$ in $D$ and in $\R^n \setminus \overline{D}.$

As we said in section \ref{out} one applies \eqref{va} to the second order derivatives of the field $v$ to conclude that
\begin{equation}\label{sdv}
|\partial_j \partial_k v(x,s) | \le C(t) \, \operatorname{dist}(x,\partial D_s)^{\gamma-1}, \quad x \notin \partial D_s, \quad 0 \le s \le t, \quad 1\le j,k \le n.
\end{equation}
See \eqref{secder} and \eqref{disbou}.

Combining \eqref{Xbi}, the analogous inequality with $\nabla X(\cdot,t)$ replaced by $\nabla X^{-1}(\cdot,t)$ and \eqref{gradfu} we get
\begin{equation}\label{gradXsd}
C(t)^{-1} \le \|\nabla X(\cdot,s)\|_{\infty} \le C(t), \quad 0 \le s \le t.
\end{equation}
Therefore $X(\cdot,s)$ is a bilipschitz homeomorphism of $\R^n$ and consequently, for all $\alpha \in R^n,$
\begin{equation}\label{dist}
C(t)^{-1} \operatorname{dist}(\alpha,\partial D_0) \le \operatorname{dist}(X(\alpha,s),\partial D_s) \le C(t) \operatorname{dist}(\alpha,\partial D_0), \quad 0 \le s \le t.
\end{equation}

Now let us turn to the second term in the right hand side of \eqref{gradFi}
\begin{equation}\label{ster}
II(x)=\nabla \left(\det \nabla X (X^{-1}(x,t),t)\right) \varphi(x,t)= \varphi_0(\alpha) \,\nabla_x J(\alpha,t),
\end{equation}
where we have set $x=X(\alpha,t)$ and $J(\alpha,t)=\det \nabla X (\alpha,t).$ The jacobian satisfies
\begin{equation*}\label{eqjac}
\frac{d}{dt} J(\alpha,t) = \operatorname{div}v(X(\alpha,t),t) \,J(\alpha,t) 
\end{equation*}
and so
\begin{equation*}\label{jac}
J(\alpha,t) = \exp \int_0^t \operatorname{div}v(X(\alpha,s),s) \,ds.
\end{equation*}
Hence $\nabla_x J(\alpha,t)$ is
\begin{equation}\label{gradjac}
\begin{split}
 \left(\exp \int_0^t \operatorname{div}v(X(\alpha,s),s)  \,ds\right) \, \left(\int_0^t \operatorname{div} \left((\nabla v)^t(X(\alpha,s),s) \right)\, \nabla X(\alpha,s)\,ds\right) \, \nabla X^{-1}(x,t), 
\end{split}
\end{equation}
where the divergence of a matrix is the vector with components the divergence of rows. Combining \eqref{gradfu}, \eqref{sdv}, \eqref{gradXsd}, \eqref{dist}, \eqref{ster} and \eqref{gradjac} we get
\begin{equation*}\label{estsec}
\begin{split}
|II(x)| & \le C(t)\, |\varphi_0(\alpha)| \, \int_0^t  \operatorname{dist}(X(\alpha,s),\partial D_s)^{\gamma-1}\,ds\\*[7pt] 
&\le C(t)\, |\varphi_0(\alpha)| \,  \operatorname{dist}(\alpha,\partial D_0)^{\gamma-1}\\*[7pt] 
& \le C(t)\,  \operatorname{dist}(\alpha,\partial D_0)^{\gamma}.
\end{split}
\end{equation*}
If $ \operatorname{dist}(x,\partial D_t) \to 0$ then $\operatorname{dist}(\alpha,\partial D_0)\to 0$ and thus
 $II(x) \to 0.$

\section{The commutators.}\label{comm}
The material derivative $\frac{D}{Dt}=\partial_t + v \cdot \nabla$ of the defining function of the previous section is
\begin{equation*}\label{matderFi}
\begin{split}
\frac{ D}{Dt} \Phi(x) &=\frac{d}{dt} \big(J(\alpha,t)\,\varphi_0(\alpha)\big) =  \operatorname{div}v(X(\alpha,t),t)\,J(\alpha,t)\,\varphi_0(\alpha) \\*[7pt] 
& = \operatorname{div}v(x,t)\,\Phi(x,t),
\end{split}
\end{equation*}
which proves \eqref{matd}.
Taking derivatives in the equation above and rearranging terms one obtains
\begin{equation}\label{matdgrad}
\frac{D }{Dt} \nabla \Phi = \nabla(\operatorname{div}v) \Phi + (\operatorname{div}v) \nabla \Phi- (\nabla v)^t (\nabla \Phi).
\end{equation}
The first term tends to $0$ at the boundary of $D_t,$ by  \eqref{sdv}. This section is devoted to proving that the second term in the right hand side, namely $ (\operatorname{div}v) \nabla \Phi- (\nabla v)^t (\nabla \Phi),$ is a sum of $n-1$ terms, each of which is a difference of boundary commutators. 
%
It clearly suffices to prove that each coordinate is a sum of $n-1$ differences of boundary commutators. We present the details for the first coordinate, which is
\begin{equation}\label{coor}
\partial_2 v_2 \,\partial_1 \Phi - \partial_1 v_2 \,\partial_2 \Phi +\dots+\partial_n v_n\, \partial_1 \Phi - \partial_1 v_n \,\partial_n \Phi. 
\end{equation}
Let us work with the first term $\partial_2 v_2 \,\partial_1 \Phi - \partial_1 v_2 \,\partial_2 \Phi .$ The others are treated similarly. The preceding expression is evaluated at $(x,t)$ with $x \in \partial D_t.$ To lighten the notation we set $D=D_t,$ so that $t$ is fixed, and $\chi=\chi_{D_t}.$ Recall  that $v(\cdot,t)=k\star \chi$ and so $$v_j(\cdot,t)=k_j\star \chi, \; 1 \le j \le n.$$ By \eqref{derker} we have in the distributions sense
\begin{equation*}\label{}
\partial_j v_j(\cdot,t) = \partial_j  k_j\star \chi= \operatorname{p.v.} \partial_j k_j \star \chi+ c_j  \chi, \quad 1 \le j \le n.
\end{equation*}
where $c_j =\int_{|\xi|=1} k_j (\xi) \xi_j \, d\sigma(\xi).$
Thus
\begin{equation*}\label{}
\partial_2 v_2(\cdot,t) \,\partial_1 \Phi(\cdot,t)= \left(\partial_2  k_2\star \chi \right)(\cdot) \,\partial_1 \Phi(\cdot,t)= \operatorname{p.v.} \left(\partial_2 k_2 \star \chi\right)\,\partial_1 \Phi + c_2  \chi  \partial_1 \Phi
\end{equation*}
and
\begin{equation*}\label{}
 \partial_2  k_2\star \left(\chi \partial_1 \Phi \right)  =   \operatorname{p.v.} \partial_2 k_2 \star  \left(\chi \partial_1 \Phi\right)  + c_2  \chi  \partial_1 \Phi,
\end{equation*}
which yields
\begin{equation}\label{dosdos}
\partial_2 v_2(\cdot,t) \,\partial_1 \Phi(\cdot,t)= \operatorname{p.v.} \left(\partial_2 k_2 \star \chi\right)\,\partial_1 \Phi -\operatorname{p.v.} \partial_2 k_2 \star  \left(\chi \partial_1 \Phi\right) +\partial_2  k_2\star \left(\chi \partial_1 \Phi \right).
\end{equation}
Similarly
\begin{equation}\label{udos}
\partial_1 v_2(\cdot,t) \,\partial_2 \Phi(\cdot,t)= \operatorname{p.v.} \left(\partial_1 k_2 \star \chi\right)\,\partial_2 \Phi -\operatorname{p.v.} \partial_1 k_2 \star  \left(\chi \partial_2 \Phi\right) +\partial_1  k_2\star \left(\chi \partial_2 \Phi \right).
\end{equation}
Since $\chi \partial_j \Phi =\partial_j (\chi \Phi), \; 1\le j\le n,$ we have
$$
\partial_2  k_2\star \left(\chi \partial_1 \Phi \right)=\partial_1  k_2\star \left(\chi \partial_2 \Phi \right)
$$
and subtracting \eqref{udos} from \eqref{dosdos} yields
\begin{equation}\label{pcoor}
\begin{split}
\partial_2 v_2(\cdot,t) \,\partial_1 \Phi(\cdot,t)-\partial_1 v_2(\cdot,t) \,\partial_2 \Phi(\cdot,t) &= \operatorname{p.v.} \left(\partial_2 k_2 \star \chi\right)\,\partial_1 \Phi -\operatorname{p.v.} \partial_2 k_2 \star  \left(\chi \partial_1 \Phi\right) \\*[5pt]
&- \big(\operatorname{p.v.} \left(\partial_1 k_2 \star \chi\right)\,\partial_2 \Phi -\operatorname{p.v.} \partial_1 k_2 \star  \left(\chi \partial_2 \Phi\right)\big),
\end{split}
\end{equation}
which is the difference of two solid commutators. Here we are using the term ``solid" to indicate that the integration is taken with respect to $n$-dimensional Lebesgue measure.
Our next task is to bring the solid commutators to the boundary.

Formula \eqref{pcoor} is an identity between distributions and is not a priori obvious that the principal value integrals exist at boundary 
points. The same can be said about the principal values on the boundary which appear in the calculation below. That they do exist in our context is a routine argument, which we postpone to the appendix. 
 
 Let $x\in \partial D.$  Given $\ep >0$ set $D_\ep = D\setminus \overline{B(x,\ep)}.$   
 By the divergence theorem
 \begin{equation*}\label{com3}
 \begin{split}
& \big(\operatorname{p.v.} \partial_2 k_2 \star (\chi \partial_1 \Phi)  \big) (x)=  \lim_{\ep \to 0} \int_{D_\ep} \partial_2 k_2 (x-y) \partial_1\Phi(y)\,dy \\*[5pt]
& = -  \lim_{\ep \to 0} \int_{\partial D_\ep} k_2(x-y) \partial_1\Phi(y) n_2(y)\,d\sigma(y) + \int_{D} k_2(x-y) \partial_{21} \Phi(y)\,dy\\*[5pt]
& = -\operatorname{p.v.} \int_{\partial D} k_2(x-y) \partial_1\Phi(y) n_2(y)\,d\sigma(y) + \int_{D} k_2(x-y) \partial_{12} \Phi(y)\,dy\\*[5pt]
&+ \lim_{\ep \to 0} \int_{\partial B(x,\ep)\cap D} k_2(x-y) \partial_1\Phi(y) n_2(y)\,d\sigma_\ep(y),
\end{split}
\end{equation*}
where $\sigma_\ep$ is the surface measure on $\partial B(x,\ep).$ We do not need to compute explicitly
the term $$ \lim_{\ep \to 0} \int_{\partial B(x,\ep)\cap D} k_2(x-y) \partial_1\Phi(y) n_2(y)\,d\sigma_\ep(y),$$ nor to worry about the second order derivative of $\Phi$ which has appeared, because they will eventually cancel out (a routine regularization argument takes care of
the actual presence of the second derivatives of $\Phi$).

We turn now to the computation of $\big(\operatorname{p.v.} \partial_{12}N \star \chi \big) (x).$ We have
\begin{equation*}\label{com4}
 \begin{split}
 \big(\operatorname{p.v.} \partial_{2} k_2 \star \chi  \big) (x) &=  \lim_{\ep \to 0} \int_{D_\ep} \partial_{2}k_2(x-y) \,dy \\*[5pt]
& = -  \lim_{\ep \to 0} \int_{\partial D_\ep} k_2(x-y)  n_2(y)\,d\sigma(y) \\*[5pt]
& = -\operatorname{p.v.} \int_{\partial D} k_2(x-y)  n_2(y)\,d\sigma(y) \\*[5pt] 
&+\lim_{\ep \to 0} \int_{\partial B(x,\ep)\cap D} k_2(x-y)  n_2(y)\,d\sigma_\ep(y).
\end{split}
\end{equation*}
Therefore
\begin{equation}\label{com5}
 \begin{split}
 & \big(\operatorname{p.v.} \partial_{2}k_2 \star (\chi \partial_1 \Phi)  \big) (x)-  \big(\operatorname{p.v.} \partial_{2}k_2 \star \chi  \big) (x) \partial_1 \Phi(x)\\*[5pt]
& = \operatorname{p.v.} \int_{\partial D} k_2(x-y)  n_2(y)\,d\sigma(y) \, \partial_1 \Phi(x)- \operatorname{p.v.} \int_{\partial D} k_2(x-y) \partial_1\Phi(y) n_2(y)\,d\sigma(y) \\*[5pt]
& +\int_{D} k_2(x-y) \partial_{21} \Phi(y)\,dy,
\end{split}
\end{equation}
since
 \begin{equation*}\label{com6} 
\lim_{\ep \to 0}  \int_{\partial B(x,\ep) \cap D} k_2(x-y)\big(\partial_1 \Phi(y)-\partial_1 \Phi(x)\big) d\sigma_\ep (y)=0,
 \end{equation*}
because $k_2$ is homogeneous of order $-(n-1)$ and $\partial_1 \Phi$ is continuous at $x.$ The conclusion is that the solid commutator in the left hand side of  \eqref{com5} is a boundary commutator plus and additional term involving second order derivatives of $\Phi.$  This term will disappear soon and in the final formulas no second derivatives of $\Phi$ are present, so that the $C^{1+\gamma}$ condition on $\Phi$ is enough.

Proceeding in a similar way we find
\begin{equation}\label{com7}
 \begin{split}
 & \big(\operatorname{p.v.} \partial_1 k_2 \star (\chi \partial_2 \Phi)  \big) (x)-  \big(\operatorname{p.v.} \partial_{1} k_2 \star \chi  \big) (x) \partial_2 \Phi(x)\\*[5pt]
& = \operatorname{p.v.} \int_{\partial D} k_2(x-y)  n_1(y)\,d\sigma(y) \, \partial_2 \Phi(x)-\operatorname{p.v.} \int_{\partial D} k_2(x-y) \partial_2\Phi(y) n_1(y)\,d\sigma(y)\\*[5pt]
& +\int_{D} k_2(x-y) \partial_{12} \Phi(y)\,dy,
\end{split}
\end{equation}
Subtracting \eqref{com7} from \eqref{com5} we see that the difference of the two solid commutators in \eqref{pcoor} is exactly, on the boundary of $D,$ a difference of boundary commutators.  Hence $\big((\operatorname{div}v) I- (\nabla v)^t \big)(\nabla \Phi)$ 
is a sum of $n-1$ terms, each being a difference of two vector valued boundary commutators.

\section{H\"older estimate of differences of  boundary commutators.}\label{hest}
We keep the notation of the previous section $D=D_t, \,\nabla \Phi=\nabla \Phi(\cdot,t),$ with $t$ fixed. Our goal is to estimate the H\"older semi-norm of order $\gamma$ on $\partial D$ of the difference of two boundary commutators. For instance,
\begin{equation*}\label{com8} 
\begin{split}
&DB(x):=\\*[7pt]
&\operatorname{p.v.} \int_{\partial D} k_2(x-y) \partial_2\Phi(y) n_1(y)\,d\sigma(y)- \operatorname{p.v.} \int_{\partial D} k_2(x-y)  n_1(y)\,d\sigma(y) \,\partial_2 \Phi(x) \\*[7pt]
& - \Big(\operatorname{p.v.} \int_{\partial D} k_2(x-y) \partial_1\Phi(y) n_2(y)\,d\sigma(y)- \operatorname{p.v.} \int_{\partial D} k_2(x-y)  n_2(y)\,d\sigma(y)\, \partial_1 \Phi(x) \Big).
\end{split}
\end{equation*}
The general case follows immediately by the same arguments.
The strategy consists in exploiting the fact that $DB(x)$ is also, for $x \in \partial D,$ a difference $DS(x)$ of two solid commutators, as we checked in the previous section.
That is, $DB(x)$ for $ x\in \partial D$ is identical to
\begin{equation*}\label{com9} 
\begin{split}
&DS(x)=DS(\Phi)(x):= \\*[7pt]
&= \big( \operatorname{p.v.} \partial_2 k_2 \star \left(\chi  \partial_1 \Phi \right) -\left(\operatorname{p.v.} \partial_{2} k_2 \star \chi \right) \partial_1 \Phi \big)- \big(\partial_1 k_2 \star \left(\chi  \partial_2 \Phi \right)-\left(\partial_{1}k_2\star \chi \right)  \partial_2 \Phi\big)  .
\end{split}
\end{equation*}
By \cite[Corollary, p.24 and Lemma p.26]{BC}, estimating each commutator separately, we have 
$\|DS\|_{\gamma, \R^n} \le C_n\,||\nabla v(\cdot,t)||_{\infty} \,\|\nabla \Phi\|_{\gamma, \R^n},$ which is not good enough, because we need $\|\nabla \Phi\|_{\gamma, \partial D}$ in place of $\|\nabla \Phi\|_{\gamma, \R^n}.$

We now consider the jet 
\begin{equation*}\label{jet} 
(0,\partial_1 \Phi, \dots, \partial_n \Phi)
\end{equation*}
on $\partial D.$  By Whitney's extension theorem \cite[Chapter VI, p.177]{S} there exists $\Psi$ of class $C^{1+\gamma}(\R^n)$ such that 
$\Psi=0$ and $\nabla \Psi= \nabla \Phi$ on $\partial D,$ satisfying
\begin{equation}\label{jet1} 
\|\nabla \Psi\|_{\gamma, \R^n} \le C_n\, \left(\|\nabla \Phi\|_{\gamma, \partial D}+\sup \{\frac{|\nabla \Phi(x)\cdot (y-x)|}{|y-x|^{1+\gamma}}: y\neq x, \;y,x\in \partial D\} \right).
\end{equation}
This precise estimate is not stated explicitly in Stein's book but it follows from the proof. 
We claim that
\begin{equation}\label{jet2} 
\sup \{\frac{|\nabla \Phi(x)\cdot (y-x)|}{|y-x|^{1+\gamma}}: y\neq x, \;y,x\in \partial D\} \le 2^{(1+\gamma)/2} \|\nabla \Phi\|_{\gamma, \partial D}.
\end{equation}
We postpone the proof of the claim and we complete the estimate of $\|DB\|_{\gamma, \partial D}.$

The extension $\Psi$ of the jet $(0,\partial_1 \Phi, \dots, \partial_n \Phi)$ on $\partial D$ , given by Whitney's extension theorem, satisfies, in view of \eqref{jet1} and  \eqref{jet2} , 
$$\|\nabla \Psi\|_{\gamma,\R^n} \le C_{n,\gamma}\, \|\nabla \Phi\|_{\gamma,\partial D}.$$
  Since $\nabla \Psi =\nabla \Phi$ on $\partial D$
the differences of solid commutators $DS(\Phi)$ and $DS(\Psi)$ are equal on $\partial D.$ Thus
\begin{equation*}\label{diff} 
\begin{split}
\|DB\|_{\gamma, \partial D} &=\|DS(\Psi)\|_{\gamma, \partial D} \le \|DS(\Psi)\|_{\gamma, \R^n}\\*[7pt] 
&\le C_n \,(||\nabla v(\cdot,t)||_{\infty}+1) \,\|\nabla \Psi\|_{\gamma, \R^n}\le C_n \,(||\nabla v(\cdot,t)||_{\infty}+1) \,\|\nabla \Phi\|_{\gamma,\partial D}.
\end{split}
\end{equation*}
This can be used to prove the a priori estimate \eqref{ap2} as in \cite{BC}.

We turn now to the proof of the claim \eqref{jet2}. 
Fix a point $x \in \partial D.$ Assume without loss of generality that $x=0$ and
$\nabla\Phi(0)= (0,\dotsc,0, \partial_n \Phi(0))$, $\partial_n
\Phi(0) > 0.$  Define $\delta=\delta(x)$ by 
$$
\delta^{-\gamma}= 2\,\frac{\|\nabla \Phi\|_{\gamma,\partial D}}{|\nabla \Phi(0)|}.
$$
This choice of $\delta$ implies that the normal vector $\nabla \Phi(y)$ remains for $y\in B(0,\delta)\cap \partial D$ in the ball
$B(\nabla \Phi(0), |\nabla \Phi(0)| /2).$ Indeed
\begin{equation*}
|\nabla \Phi(y)-\nabla \Phi(0)| \le \|\nabla \Phi\|_{\gamma,\partial D} \, \delta^{\gamma}  = \frac{ |\nabla \Phi(0)|}{2}.
\end{equation*}
Then given $y\in B(0,\delta)\cap \partial D$ the tangent hyperplane to $\partial D$ at $y$ forms an angle less than $30$ degrees with the horizontal plane and thus $\partial D$ is the graph of a function $y_n=\varphi(y'_n)$ which satisfies a Lipschitz condition with constant less than $1.$ Here we have used the standard notation $y=(y',y_n), \; y'=(y_1, \dots, y_{n-1}).$ The function $\varphi$ is defined in the open set $U$ which is the projection of $B(0,\delta)\cap \partial D$ into $\R^{n-1}$ defined by $y \rightarrow y'.$ 
By the implicit function theorem $\varphi$ is of class $C^{1+\gamma}$ in its domain.

Note that for each $y\in \partial D\cap B(0,\delta),$ the segment $\{t\,y' : 0\le t\le 1 \}$ is contained in $U,$ as an elementary argument shows. The mean value theorem on that segment for the function $t \to \varphi(ty')$ yields
$$
 \frac{|\nabla \Phi(0)\cdot y|}{|y|^{1+\gamma}}= 
\frac{|\nabla \Phi(0)| |\varphi(y')|}{|y|^{1+\gamma}}\le  \frac{|\nabla \Phi(0)|}{|y|^{1+\gamma}} \sup \{|\nabla \varphi(z')| : z' \in U, \; |z'| \le |y'|\}
\,|y'|
$$

By implicit differentiation
$$
\partial_j \varphi(z')=- \frac{\partial_j \Phi(z',\varphi(z'))}{\partial_n
\Phi(z',\varphi(z'))}, \quad 1 \le j \le n-1,
$$
and so, recalling that $\partial_j \Phi(0)=0, \, \; 1 \le j \le n-1,$ and that $z=(z',\varphi(z')),$

$$
|\nabla\varphi(z')| \le
\frac{\|\nabla\Phi\|_{\gamma,\partial D} \,|z|^\gamma}{|\partial_n \Phi(z)|} \le \frac{2}{|\nabla \Phi(0)|}\, \|\nabla\Phi\|_{\gamma,\partial D}\;2^{\gamma/2}\,|z'|^\gamma,
\quad |z'|  \le |y'|,
$$
because
\begin{equation*}
\begin{split}
|\partial_n \Phi(z)| & \ge |\partial_n\Phi(0)| -|\partial_n \Phi(z) - \partial_n \Phi(0)|  \\*[7pt]
& \ge |\nabla \Phi(0)|-\|\nabla \Phi\|_{\gamma,\partial D} \, \delta^{\gamma} = \frac{ |\nabla \Phi(0)|}{2}
\end{split}
\end{equation*}
and
$$
|z|=(|z'|^2+ \varphi(z')^2)^{1/2} \le  \sqrt{2} \,|z'|.
$$
Thus
$$
\frac{|\nabla \Phi(0)\cdot y|}{|y|^{1+\gamma}} \le 2^{1+\gamma/2} \,\|\nabla\Phi\|_{\gamma,\partial D}, \quad y\in\partial D\cap B(0,\delta).
$$
If $y \in \partial D \setminus B(0,\delta)$
$$
\frac{|\nabla \Phi(0)\cdot y|}{|y|^{1+\gamma}} \le \frac{|\nabla \Phi(0)|}{|y|^{\gamma}}\le \frac{|\nabla \Phi(0)|}{\delta^{\gamma}} =2\, \|\nabla \Phi\|_{\gamma,\partial D},
$$
which completes the proof of \eqref{jet2}.

\section{Domains enclosed by ellipses as Cauchy patches.}\label{ell}
In this section we consider the transport equation in the plane given by the Cauchy kernel
\begin{equation}\label{transcau2}
\begin{split}
&\partial_{t}\rho(z,t)+v(z,t)\cdot \nabla \rho(z,t)=0,\\
&v(z,t)=\left(\frac{1}{\pi z} * \rho (\cdot, t)\right)(z),\\
&\rho(z,0)=\chi_{D_{0}}(z),
\end{split}
\end{equation}
$z=x+i y \in \C=\R^2$ and $t\in \R.$ Note that we have changed the normalization of the velocity field in  \eqref{transcau} by a factor of $2.$

We take the initial patch to be the domain enclosed by an ellipse
\begin{equation*}\label{ell1}
E_{0}=\left\{ (x,y)\in \mathbb{R}^{2}:  \frac{x^{2}}{a_0^{2}}+\frac{y^{2}}{b_0^{2}}<1\right\}.
\end{equation*}
We will show that the solution provided by the Theorem is of the form $\chi_{E_t}(z)$ with
\begin{equation*}\label{ellt}
E_{t}=\left\{ (x,y)\in \mathbb{R}^{2}:  \frac{x^{2}}{a(t)^{2}}+\frac{y^{2}}{b(t)^{2}}<1\right\},
\end{equation*}
and
\begin{align}\label{ida}
a(t)= a_0 \frac{(a_0+b_0)\,e^{2t}}{b_0+a_0\,e^{2t}},\quad t\in\R \\\label{idb}
b(t)=b_0 \frac{(a_0+b_0)}{b_0+a_0\,e^{2t}},\quad t\in\R.
\end{align}
As $t \to \infty,\,$ $a(t) \to a_0+b_0$ and $b(t) \to 0,$  so that the ellipse at time $t$ degenerates into the segment $[-(a_0+b_0),a_0+b_0]$ as $t \to +\infty$ and into the segment $i [-(a_0+b_0),a_0+b_0]$ on the vertical axis as $t \to -\infty.$ 

Since \eqref{transcau2} is not rotation invariant, one has to consider also the case of an initial patch given by the domain enclosed by a tilted ellipse
\begin{equation*}\label{tilted2}
E(a,b,\theta)=e^{i\theta}
\left\{ (x,y)\in \mathbb{R}^{2}:  \frac{x^{2}}{a^{2}}+\frac{y^{2}}{b^{2}}<1\right\}.
\end{equation*}
In this case the straight line containing the semi-axis of length $a$ makes an angle $\theta$ with the horizontal axis and we take $0 < \theta < \pi/2.$

Assume that the initial patch is $E_0=E(a_0,b_0,\theta_0).$ Then we will show  that the solution given by the theorem is $\chi_{E_t}$  with $E_t=E(a(t), b(t),\theta(t)),$ where $a(t), b(t)$ and $\theta(t)$ are the unique solutions of the system
\begin{equation}\label{sys}
\begin{split}
a'(t)&=\frac{2}{a_0+b_0}\, a(t)\,b(t)\, \cos(2\theta(t)) \\*[7pt]
b'(t)&= -\frac{2}{a_0+b_0}\, a(t)\,b(t)\, \cos(2\theta(t)) \\*[7pt]
\theta'(t)&= -\frac{2}{a_0+b_0}\, \frac{a(t)\,b(t)}{a(t)-b(t)}\, \sin(2\theta(t)),
\end{split}
\end{equation}
with initial conditions $a(0)=a_0, \; b(0)=b_0, \; \theta(0)=\theta_0.$

We start the proof by assuming that the patch $D_t$ of the weak solution provided by the theorem is indeed $E_t.$ Let $z(t)$ be the trajectory of the particle that at time $0$ is at $z(0) \in\partial E_0.$ Then
\begin{equation}\label{flow}
\frac{dz}{dt}=v(z(t),t), \quad\quad\quad z(0) \in \partial E_0,
\end{equation}
and $v$ is the velocity field of \eqref{transcau2}. It is a well-known fact that $v$ can be explicitly computed \cite{HMV}. One has
\begin{equation}\label{cauell}
v(z,t)=\left(\frac{1}{\pi z} \star \chi_{E_t}\right)(z)=\overline{z}-q(t)\, e^{-2\theta(t)}z, \quad z \in E_t, \quad q(t)=\frac{a(t)-b(t)}{a(t)+b(t)}.
\end{equation}
Indeed in 	\cite{HMV} only the case $\theta(t)=0$ is dealt with, but the general case follows easily from the behavior under rotations of a convolution with the Cauchy kernel. 

To lighten the notation think that $t$ is fixed and write $a=a(t), b=b(t), \theta=\theta(t), q=q(t), z=z(t)=x(t)+iy(t)=x+iy.$ 
The condition $z(t) \in \partial E_t$ is equivalent to $ e^{-i\theta(t)}z(t) \in \partial E(a(t), b(t),0),$ 
which is
\begin{equation*}\label{}
\frac{\left(x\,\cos(\theta)+y\,\sin(\theta)\right)^2}{a^2}+\frac{(-x\,\sin(\theta)+y\,\cos(\theta))^2}{b^2}=1,
\end{equation*}
and can also be written more concisely as
\begin{equation}\label{eqel}
\frac{{\big\langle z, e^{i\theta} 	\rangle}^2}{a^2}+\frac{{\big\langle z, i\,e^{i\theta} \rangle}^2}{b^2}=1.
\end{equation}
Here we have denoted by $\langle u,v\rangle$ the scalar product of the vectors $u$ and $v.$
Now proceed as follows. Take the derivative in \eqref{eqel} with respect to $t$ and then replace $z'(t)$ by the expression of the field
given by \eqref{cauell}. We get an equation containing $a,b,\theta \;\text{and}\; z,$  which determines $z(t),$ the solution of the CDE. 
This equation is 
\begin{equation}\label{eqzeta}
\begin{split}
0&=\frac{{\big\langle z, e^{i\theta} 	\rangle}}{a^2}\left(\big\langle z, e^{i\theta}\,i\,\theta'-q\,e^{3i\theta}\rangle + \big\langle  \overline{z}, e^{i\theta} \rangle
\right)- \frac{a'}{a^3} {\big\langle z, e^{i\theta} \rangle}^2 \\*[7pt]
&+  \frac{{\big\langle z, i\,e^{i\theta} 	\rangle}}{b^2}\left(\big\langle z, -e^{i\theta}\,\theta'-i\,q\,e^{3i\theta}\rangle + \big\langle \overline{z}, i\,e^{i\theta} \rangle \right)- \frac{b'}{b^3} {\big\langle z, i\,e^{i\theta} \rangle}^2.
\end{split}
\end{equation}

Evaluate at  $z=z(t)=a(t)e^{i\theta(t)}$ (which is a vertex of the ellipse at time $t$). One gets the equation
\begin{equation}\label{eqa}
a'=2 \frac{a\,b}{a+b}\,\cos(	2\theta).
\end{equation}
Evaluating at the other vertex of the ellipse at time $t,$ that is, at $z=z(t)=b(t)\,i\, e^{i\theta(t)},$ yields
\begin{equation}\label{eqb}
b'=-2 \frac{a\,b}{a+b}\,\cos(2\theta).
\end{equation}
Adding \eqref{eqa} and \eqref{eqb} we see that $a+b$ is constant, then equal to $a_0+b_0.$ Thus we have the first two equations in
 \eqref{sys}. 
 
 Before getting the third equation let us solve the case in which the initial ellipse has axes parallel to the coordinate axes ($\theta_0=0$).
 In this case set $\theta(t)=0, \; t \in \R.$
Replacing in \eqref{eqb} $a$ by
$a_0+b_0-b$ and solving we get \eqref{idb} and then \eqref{ida}.

Now take the domain $E_t=E(a(t),b(t),0)$, the vector field 
$$v(z,t)=\left(\frac{1}{\pi z} \star \chi_{E_t}\right)(z)=\overline{z}-q(t)\,z, \quad z \in E_t,$$
and the flow
\begin{equation*}\label{flow2}
\frac{dz}{dt}=\overline{z}-q(t)\,z, \quad\quad\quad z(0) \in \partial E_0,
\end{equation*}
The preceding system is
\begin{equation*}\label{flow3}
\begin{split}
\frac{dx(t)}{dt} &=  \frac{2 b(t)}{a_0+b_0} x(t)\\*[5pt]
\frac{dy(t)}{dt} &= - \frac{2 a(t)}{a_0+b_0} y(t).
\end{split}
\end{equation*}
Then the flow map is linear on $E_0$ and given by a diagonal matrix. Hence the flow preserves the coordinate axes and maps $\partial E_0$ into an ellipse with axes parallel to the coordinate axes enclosing a domain $\tilde{E_t}$ . But \eqref{eqa} and  \eqref{eqb} say exactly that the vertices of $\partial E_t$ belong to $\partial \tilde{E_t}.$ Thus $\tilde{E_t}=E_t$ and so
$\chi_{E_t}$ is the unique weak solution of the Cauchy transport equation in the class of characteristic functions of $C^{1+\gamma}$ domains.

Let us now go back to the general case and obtain a third equation involving $\theta'.$  Impose that the intersection of the ellipse $\partial E_t$ with the positive real axis belongs to the image of  $\partial E_0$  under the flow. In other words replace $z(t)$ in \eqref{eqzeta} by $$((\frac{\cos^2\theta}{a^2}+\frac{\sin^2\theta}{b^2})^{-1/2},0).$$  After a lengthy computation one gets
\begin{equation}\label{eqter}
\theta'=-\frac{2}{a_0+b_0}\, \frac{ab}{a-b} \,\sin(2\theta), 
\end{equation}
provided  $a\neq b.$

We know claim that the system \eqref{sys} has a unique solution defined for all times $t\in \R$ provided $a_0 \neq b_0.$ The case
$a_0=b_0$ corresponds to an initial disc and so to the case $\theta_0=0,$ which has been discussed before. Consider the open set
\begin{equation*}
 \Omega=\{(a,b,\theta) \in \R^3: a>0,\, b>0, \, a\neq b \text{ and } 0<\theta<\frac{\pi}{2}\}. 
\end{equation*}
Clearly a unique solution of the system exists locally in time for any initial condition $(a_0,b_0,\theta_0)\in \Omega,$ because the function giving the system is $C^{\infty}$  in $ \Omega.$ We claim that this solution exists for all times. Assume that the maximal interval of existence is  $(-T,T)$ for some $0<T<\infty.$  By the first two equations of the system \eqref{sys}  $|a'|$ and $|b'|$ are bounded above by  $2(a_0+b_0)$ and hence the limits $\lim_{t\rightarrow T^{}}a(t)=a(T^{})$  and $\lim_{t\rightarrow T^{}}b(t)=b(T^{})$ exist.
We also have
\begin{equation*}
 \Big|\frac{a'}{a}\Big| \le 2 \quad \text{and}\quad
 \Big|\frac{b'}{b}\Big| \le2
 \end{equation*}
and so
\begin{equation*}
 0<a_0e^{-2T^{}}\le a(T^{}) \le a_0e^{2T^{}} \text{ and } 0<b_0e^{-2T^{}}\le b(T^{}) \le b_0 e^{2T^{}}. 
\end{equation*}

Note that $\theta'(t)$ cannot vanish. Otherwise, by \eqref{eqter},  $\theta(t)=0$ for some $t,$ and in this case we have already checked that the system can be solved for all times. Hence $\theta'$ has constant sign. When $a_0>b_0$  the function $\theta$ decreases and if $a_0<b_0$ the function $\theta$ increases. In any case we have that there exists 
$$
\theta(T^{})=\lim_{t\rightarrow T^{}}\theta(t). 
$$
We cannot have $\theta(T^{})=0$ or $\theta(T^{})=\frac{\pi}{2}$ because we have solved the equation in these cases for all times. For the same reason we cannot have $a(T^{})=b(T^{}).$   Therefore  $(a(T^{}),b(T^{}),\theta(T^{}))\in \Omega$ and we can solve the system  past $T^{},$ which is a contradiction. 

We proceed now to prove that the domain $E_t=E(a(t),b(t),\theta(t))$ enclosed by the ellipse provided by the solution of \eqref{sys}
yields the weak solution $\chi_{E_t}$ of the transport equation \eqref{transcau2} with initial condition $D_0=E_0$. We consider the field 
\eqref{cauell} and the trajectory \eqref{flow} of a particle initially at the boundary point $z(0) \in \partial E_0.$ Since the velocity field is linear in $E_t$ the flow is a linear function of $z(0)\in E_0.$ Thus the initial ellipse $\partial E_0$ is mapped into an ellipse $\partial \tilde{E_t}$
enclosing $\tilde{E_t},$ the image of $E_0$ under the flow map. To show that $\chi_{E_t}$ is a weak solution of the Cauchy transport equation we only need to ascertain that $E_t=\tilde{E_t}.$ But the three equations of \eqref{sys} simply mean that the vertices of $E_t$ and  the intersection of $E_t$ with the horizontal axis are in the image of $\partial E_0$ under the flow map. It is now a simple matter to realize that there is only one ellipse centered at the origin containing those three points. 

A surprising result arises when examining the asymptotic behavior as $t \to \infty$ of the weak solution of the Cauchy transport equation \eqref{transcau2} when the initial condition is $E(a_0,b_0,\theta_0)$ with $a_0\neq b_0$ and $\theta_0 > 0.$ We know that the solution of the system \eqref{sys} never leaves the open set $\Omega.$ In particular $a(t)-b(t)$ has a definite sign determined by the initial condition. Assume for definiteness that $a_0-b_0 >0,$ so that $a(t)-b(t) >0, \;t\in \R$ and hence $\theta(t)$ is a decreasing function. Then
the limit $\theta_\infty=\lim_{t \to \infty} \theta(t)$ exists. 
The system \eqref{sys} readily yields that the function $(a-b) \sin(2\theta)$ has vanishing derivative, so that
\begin{equation}\label{sin}
(a(t)-b(t)) \sin(2\theta(t))=(a_0-b_0) \sin(2 \theta_0), \, t \in \R. 
\end{equation}
Thus
$(a_0+b_0)  \sin(2\theta(t)) \ge (a(t)-b(t)) \sin(2\theta(t))=(a_0-b_0) \sin(2\theta_0)$
 and taking limits 
 $$\sin(2 \theta_\infty) 	\ge \frac{a_0-b_0}{a_0+b_0} \sin(2\theta_0)>0,$$
which means that the limit angle $\theta_\infty$ is positive. In other words, the axes of the ellipses at time $t$ do not approach the coordinate axes.

Assume that $0<\theta_0 	\le \frac{\pi}{4}.$ Since $\theta(t)$ decreases, $0< 2 \theta(t) < \frac{\pi}{2},\; t>0,$  which implies that $a(t)$
 increases and $b(t)$ decreases. By \eqref{eqb}
$$
b(t) = b_0 \exp \int_0^t -\frac{2}{a_0+b_0} a(s) \cos(2\theta(s))\,ds 	\le b_0 \exp\left(-\frac{2 a_0}{a_0+b_0} \cos(2\theta_0) \,t \right),
$$
and so $b_\infty=0,$  provided $\theta_0 < \frac{\pi}{4}.$  If $\theta_0 =\frac{\pi}{4}$  we break the integral above into two pieces, the first between $0$ and $1$ and the second between $1$ and $t.$ We get, for some constant $C$ independent of $t,$
$$
b(t) \le C\, \exp\left(-\frac{2 a_0}{a_0+b_0} \cos(2\theta(1)) \,(t-1) \right), \quad t>1,
$$
which again yields $b_\infty=0.$
By \eqref{sin}
\begin{equation}\label{tetainf}
\sin(2 \theta_\infty)= \frac{a_0-b_0}{a_0+b_0} \sin(2\theta_0),
\end{equation}
which determines the limit angle in terms of the initial data.

\begin{figure}[ht]
\centering
\begin{minipage}{.4\textwidth}
\includegraphics{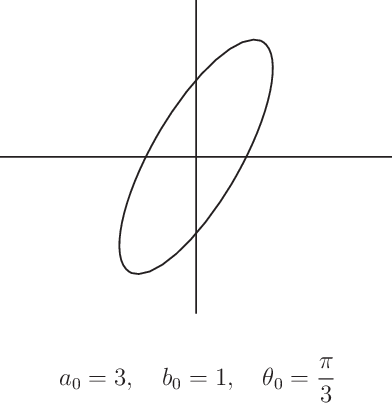}
\end{minipage}
\hspace*{1cm} 
\begin{minipage}{.4\textwidth}
\includegraphics{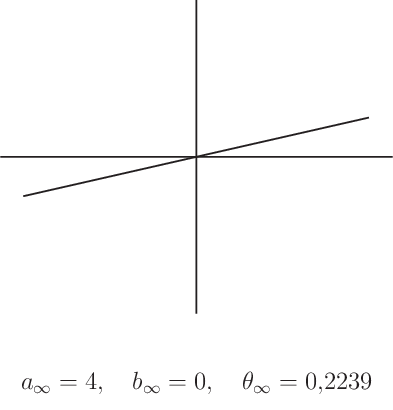}
\end{minipage}%
\caption{The initial ellipse and the final segment.}
\label{fig:ellipse}
\end{figure}

Let us turn now to the case $ \frac{\pi}{4} < \theta_0 	< \frac{\pi}{2}.$ In Figure \ref{fig:ellipse} one can see the initial ellipse and the final segment. In view of the first two equations of the system \eqref{sys}
at least for a short time  $a(t)$ decreases and $b(t)$ increases. If
one has $ \frac{\pi}{4} \le \theta_\infty,$ then  $\cos(2\theta(t)) < 0$ for $t>0$ and $a(t)$ decreases and $b(t)$ increases for all times. Integrating the third equation in \eqref{sys} we obtain
\begin{equation*}
\begin{split}
\tan(\theta(t))& =\tan(\theta_0) \exp\left(\frac{-4}{a_0+b_0} \int_0^t \frac{a(s)b(s)}{a(s)-b(s)}\,ds\right)\\*[7pt]
& \le \tan(\theta_0) \exp\left(-\frac{-4 b_0^2 }{a_0^2-b_0^2} \,t\right).
\end{split}
\end{equation*}
Letting $t \to \infty$ we get $\tan(\theta_	\infty)=0,$ which is impossible. Hence $\theta_\infty < \frac{\pi}{4}.$ Then for some
$t_0$ we have $\theta(t_0)< \frac{\pi}{4},$ which brings us into the previous case, in particular to the expression \eqref{tetainf} for the limiting angle $\theta_\infty.$

Arguing similarly with $t\to -\infty$ we get \eqref{tetainf} with $\sin(2 \theta_\infty)$ 
replaced by $\sin(2 \theta_{-\infty}),$  where $\theta_{-\infty}=\lim_{t\to -\infty} \theta(t).$ Thus $\theta_{-\infty}=\frac{\pi}{2}-\theta_\infty.$

The case $a_0<b_0$ is reduced to $a_0>b_0$ by taking conjugates (symmetry with respect to the horizontal axis).
Indeed, \eqref{transcau2} is invariant by taking conjugates, as a simple computation shows. If one has $a_0<b_0$ and an angle $\theta_0,$ the symmetric ellipse has semi-axes $A_0=b_0,B_0=a_0$ and angle $\theta_0'=\frac{\pi}{2}-\theta_0.$

\appendix
\setcounter{secnumdepth}{0}
\section{Appendix: Existence of principal values }\label{app}
The first fact we prove in this section is the following.

\begin{lemma*}\label{epv}
Let $D$ be a bounded domain with boundary of class $C^{1+\gamma},\; 0<\gamma<1.$  Let $L: \R^n\setminus \{0\} \rightarrow \R$ be an even kernel, continuous on $\R^n\setminus \{0\},$  homogeneous of degree $-n,$ which satisfies  cancellation property $\int_{|\xi|=1} L(\xi) \,d\sigma(\xi)=0.$
Then for each $x\in \partial D$ the principal value
\begin{equation*}\label{pv}
\left(\operatorname{p.v.} L\star \chi_D\right) (x)=\lim_{\ep\to 0} \int_{\{	y\in D: |y-x|>\ep\}} L(x-y)\,dy
\end{equation*}
exists.
\end{lemma*}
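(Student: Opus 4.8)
The plan is to turn the principal value at a boundary point into an absolutely convergent integral by subtracting the contribution of the half-space tangent to $\partial D$ at that point, on which $L$ integrates to zero thanks to its evenness and the cancellation property. First I would normalise exactly as in the proof of Vasin's lemma above: with $r_0 = r_0(D)$ as there, take $x = 0$, assume the tangent hyperplane to $\partial D$ at $0$ is $\{y_n = 0\}$, and that $D \cap B(0,2r_0) = \{y \in B(0,2r_0) : y_n < \phi(y')\}$ with $\phi \in C^{1+\gamma}(B'(0,2r_0))$, $\phi(0) = 0$, $\nabla\phi(0) = 0$, so that $|\phi(y')| \le C\,|y'|^{1+\gamma}$ with $C = \|\nabla\phi\|_{\gamma,B'(0,2r_0)}$. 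Let $H_- = \{y \in \R^n : y_n < 0\}$, the half-space on the $D$-side of the tangent plane.

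Next I would peel off the part away from the singularity. Since $|L(z)| \le C\,|z|^{-n}$ (continuity of $L$ on $S^{n-1}$) and $D$ is bounded, for $0 < \epsilon < r_0$ one has
\begin{equation*}
\int_{\{y \in D\,:\, |y| > \epsilon\}} L(-y)\, dy = \int_{D \setminus B(0,r_0)} L(-y)\, dy + \int_{D \cap B(0,r_0),\, |y| > \epsilon} L(-y)\, dy,
\end{equation*}
where the first term is a finite constant independent of $\epsilon$; so the matter reduces to the second term. For the half-space, homogeneity and polar coordinates give
\begin{equation*}
\int_{H_- \cap B(0,r_0),\, |y| > \epsilon} L(-y)\, dy = \Big(\int_\epsilon^{r_0} \frac{d\rho}{\rho}\Big)\int_{S^{n-1},\, \theta_n < 0} L(-\theta)\, d\sigma(\theta) = 0,
\end{equation*}
because, after the substitution $\theta \mapsto -\theta$, the angular integral equals $\int_{S^{n-1},\,\theta_n > 0} L(\theta)\, d\sigma(\theta) = \frac{1}{2}\int_{S^{n-1}} L\, d\sigma = 0$, using the evenness of $L$ together with the cancellation property.

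Subtracting this vanishing half-space integral from the second term above leaves only the symmetric difference:
\begin{equation*}
\int_{D \cap B(0,r_0),\, |y| > \epsilon} L(-y)\, dy = \int_{(D \setminus H_-) \cap B(0,r_0),\, |y| > \epsilon} L(-y)\, dy \;-\; \int_{(H_- \setminus D) \cap B(0,r_0),\, |y| > \epsilon} L(-y)\, dy.
\end{equation*}
The point is that $(D \setminus H_-) \cup (H_- \setminus D)$ intersected with $B(0,r_0)$ lies inside the tangential cusp $\{y : |y'| < r_0,\ |y_n| \le C\,|y'|^{1+\gamma}\}$, over which $L(-\,\cdot\,)$ is absolutely integrable:
\begin{equation*}
\int_{\{|y'| < r_0,\ |y_n| \le C|y'|^{1+\gamma}\}} |L(-y)|\, dy \le C \int_{|y'| < r_0} |y'|^{-n}\,|y'|^{1+\gamma}\, dy' = C \int_{|y'| < r_0} |y'|^{1+\gamma-n}\, dy' < \infty,
\end{equation*}
the last integral converging in $\R^{n-1}$ precisely because $\gamma > 0$ (the exponent exceeds $1 - n$). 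Hence dominated convergence applies to the two integrals on the right, their limits as $\epsilon \to 0$ exist, and consequently the principal value exists, equal to the sum of the constant $\int_{D \setminus B(0,r_0)} L(-y)\, dy$ and the two absolutely convergent cusp integrals. The only substantive point is this last absolute integrability over the tangential cusp, which is exactly where the $C^{1+\gamma}$ regularity of $\partial D$ with $\gamma > 0$ enters in an essential way; the half-space cancellation and the bookkeeping of the decomposition are routine, and the same reasoning (with no cusp at all) covers interior and exterior points, so the principal value in fact exists at every point of $\R^n$.
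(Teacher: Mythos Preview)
Your proof is correct and follows essentially the same approach as the paper: flatten the boundary at $x$, use evenness together with the cancellation property to see that the lower half-space contributes nothing at every truncation level, and reduce to the tangential region $(D\setminus H_-)\cup(H_-\setminus D)$ where the $C^{1+\gamma}$ smoothness makes the kernel absolutely integrable. The only cosmetic difference is that the paper proves the Cauchy criterion for the truncated integrals (estimating $\int_{\delta<|y|<\epsilon}$ over the tangential region by $C\epsilon^\gamma$ via polar coordinates), whereas you exhibit the limit directly via dominated convergence on the cusp; the underlying mechanism is identical.
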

\begin{proof} Without loss of generality assume that $x=0$, that the tangent hyperplane to $\partial D$ at $0$ is $\{y\in\R^n:y_n=0\}$
and that $r_0>0$ is so small that there exists a function 
$$\varphi \in C^{1+\gamma}(B'(0,2r_0)), \quad B'(0,2r_0)=\{y \in \R^n: |y'|< 2 r_0 \}, \quad y'=(y_1,\dots,y_{n-1})$$
such that $D\cap B(0,r_0)=\{y\in B(0,r_0): y_n < \varphi(y')\}.$  

For $0<r$ set 
$$S_r=\{y \in \R^n: |y|=r\}, \quad S_r^+=\{y \in S_r: y_n >0\}\quad \text{and}\quad S_r^-=\{y \in S_r: y_n <0\}.$$
Since $L$ is even
$$
0=\int_{S_r} L(y) \, d\sigma(y) = \int_{S_r^+} L(y) \, d\sigma(y) +\int_{S_r^-} L(y) \, d\sigma(y) =2 \int_{S_r^-} L(y) \, d\sigma(y). 
$$
Set $H_{-}=\{y\in \R^n : y_n <0\}.$  For $0< \delta <\ep < r_0$ we then have
\begin{equation*}\label{}
\begin{split}
&-\int_{\{y\in D: |y|>\ep\}} L(y)\,dy + \int_{\{y \in D: |y|>\delta\}} L(y)\,dy  \\*[5pt] &=\int_{\{y\in \R^n: \delta<|y|<\ep\}\cap(D\setminus H_{-})} L(y)\,dy 
- \int_{\{y\in \R^n: \delta<|y|<\ep\} \cap (H_{-}\setminus D)} L(y)\,dy.
\end{split}
\end{equation*}
The tangential domains $\left(D\setminus H_{-}\right) \cap B(0,\ep)$ and $\left(H_{-}\setminus D\right) \cap B(0,\ep)$ are very small.  Indeed,
\begin{equation*}\label{}
\begin{split}
\left|\int_{\{y\in \R^n: \delta<|y|<\ep\}\cap(D\setminus H_{-})} L(y)\,dy \right| & \le \int_\delta^\ep \frac{1}{\rho^n} \rho^{n-1}  \sigma\{ \theta \in S^{n-1} : \rho \theta \in D\setminus H_{-} \} \,d\rho\\*[5pt]
 & \le  C\, \int_\delta^{\ep} \rho^{-1+\gamma} \ \,d\rho 	\le \frac{C}{\gamma} \,\ep^\gamma.
\end{split}
\end{equation*}
One obtains in the same way  
$$ \left|\int_{\{y\in \R^n: \delta<|y|<\ep\} \cap (H_{-}\setminus D)} L(y)\,dy \right| \le \frac{C}{\gamma} \,\ep^\gamma$$
and so the proof is complete.
\end{proof}

The second result is the following.

\begin{lemma*}\label{epv2}
Let $D$ be a bounded domain with boundary of class $C^{1+\gamma},\; 0<\gamma<1.$  Let $K: \R^n\setminus \{0\} \rightarrow \R$ be an odd kernel of class $C^1(\R^n\setminus \{0\}),$  homogeneous of degree $-(n-1).$ Let $\varphi$ be a function defined on $\partial D$ satisfying a H\"older condition of some positive order on $\partial D.$
Then for each $x\in \partial D$ and each $1 \le j \le n$ the principal value
\begin{equation*}\label{pv}
\left(\operatorname{p.v.} K\star \varphi n_j d\sigma \right) (x)=\lim_{\ep\to 0} \int_{\{	y\in \partial D: |y-x|>\ep\}} K(x-y) \varphi(y) n_j(y)\,d\sigma(y)
\end{equation*}
exists.
\end{lemma*}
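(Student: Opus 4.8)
The strategy is to reduce the existence of the principal value to the cancellation of the kernel over spheres, exactly as in the previous Lemma, but now accounting for the fact that $K$ is odd and homogeneous of degree $-(n-1)$ and the integration is on the hypersurface $\partial D$ with respect to surface measure. Fix $x\in\partial D$ and, after a rigid motion, assume $x=0$, that the tangent hyperplane to $\partial D$ at $0$ is $\{y_n=0\}$, and that in a ball $B(0,2r_0)$ the boundary is the graph $y_n=\phi(y')$ of a $C^{1+\gamma}$ function with $\phi(0)=0$, $\nabla\phi(0)=0$, so that $|\phi(y')|\le C|y'|^{1+\gamma}$. The surface measure on the graph is $d\sigma(y)=\sqrt{1+|\nabla\phi(y')|^2}\,dy'$, which equals $dy'$ up to a factor $1+O(|y'|^{2\gamma})$, and $n_j(y)=n_j(0)+O(|y'|^\gamma)$; moreover $K(x-y)=K(-y)$ is $O(|y'|^{-(n-1)})$ in this region.

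The first key step is to write, for $0<\delta<\e<r_0$,
\begin{equation*}
\int_{\{y\in\partial D:\,\delta<|y-x|<\e\}} K(x-y)\varphi(y) n_j(y)\,d\sigma(y)
= \int_{\{y'\in\R^{n-1}:\, \delta\lesssim|y'|\lesssim\e\}} K(-(y',\phi(y')))\,\varphi\,n_j\,\sqrt{1+|\nabla\phi|^2}\,dy',
\end{equation*}
and then split the integrand by subtracting and adding the ``flat, frozen'' version $K(-(y',0))\,\varphi(x)\,n_j(0)$. The difference $K(-(y',\phi(y')))\,\varphi(y)n_j(y)\sqrt{1+|\nabla\phi|^2} - K(-(y',0))\,\varphi(x)n_j(0)$ is, using the $C^1$ bound on $K$ together with $|\phi(y')|\le C|y'|^{1+\gamma}$, the Hölder continuity of $\varphi$ and $n_j$, and the estimate on $\sqrt{1+|\nabla\phi|^2}-1$, of size $O(|y'|^{-(n-1)+\beta})$ for some $\beta>0$ (here $\beta$ is the minimum of $\gamma$ and the Hölder exponent of $\varphi$). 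Hence this part is absolutely integrable near $0$ and contributes a convergent integral as $\delta,\e\to0$. The second key step is that the remaining ``frozen'' piece $\varphi(x)n_j(0)\int_{\delta<|y'|<\e} K(-(y',0))\,dy'$ vanishes identically, because $y'\mapsto K(-(y',0))$ is an odd function of $y'\in\R^{n-1}$ (as $K$ is odd), so its integral over the symmetric annulus $\{\delta<|y'|<\e\}$ is zero. Combining the two steps, the Cauchy difference of the truncated integrals tends to $0$ as $\e\to0$, which is exactly the Cauchy criterion for existence of the principal value; the contribution from $\{y\in\partial D:|y-x|>r_0\}$ is a fixed convergent integral since there the kernel is bounded.

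The main obstacle is bookkeeping the error terms carefully enough to see that every piece produced by the splitting is genuinely $O(|y'|^{-(n-1)+\beta})$ with a strictly positive $\beta$: one must check that the oddness is used only for the fully frozen kernel $K(-(y',0))$ and that each place where $\phi$, $\nabla\phi$, $\varphi$, or $n_j$ is replaced by its value at $0$ gains a factor $|y'|^{\gamma}$ or $|y'|^{1+\gamma}$ (the latter, when hitting $K$, still only costs one power of $|y'|$ by the gradient bound on $K$, so no cancellation of the singular kernel is ever needed beyond the symmetry argument). Once this is in place the conclusion is immediate, and the argument is essentially identical to — and slightly simpler than — the proof of the preceding Lemma, the only new ingredients being the presence of the density $\varphi n_j$ and the graph parametrization of $d\sigma$.
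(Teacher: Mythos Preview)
Your argument is correct and proceeds by a genuinely different route from the paper. The paper first strips off $\varphi$ by writing $\varphi(y)=(\varphi(y)-\varphi(x))+\varphi(x)$ (so that the first piece is absolutely convergent), and then applies the divergence theorem to convert the remaining surface integral $\int_{\partial D\cap\{|y|>\ep\}} K(-y)\,n_j(y)\,d\sigma(y)$ into the solid integral $-\int_{D\cap\{|y|>\ep\}}\partial_j K(y)\,dy$ plus a spherical term $\int_{D\cap\{|y|=\ep\}}K(y)\,n_j(y)\,d\sigma(y)$. The solid integral falls directly under the previous Lemma once one checks $\int_{|\xi|=1}\partial_j K(\xi)\,d\sigma(\xi)=0$, and the spherical term is handled by comparison with the half-space. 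Your approach stays on the hypersurface throughout: you parametrize by the graph, freeze $K$, $\varphi$, $n_j$ and the metric factor simultaneously, and kill the fully frozen piece by the oddness of $y'\mapsto K(-(y',0))$ on $\R^{n-1}$. This is the standard ``flattening'' argument for layer potentials; it is self-contained (you never invoke the preceding Lemma), whereas the paper's argument is designed to recycle the solid-integral result already established.

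One small point to tighten. After parametrizing, the truncation region is $\{y':\delta<|(y',\phi(y'))|<\e\}$, not the symmetric annulus $\{y':\delta<|y'|<\e\}$, so your frozen integral is not literally taken over a symmetric set and does not vanish \emph{identically}. Since $|y|-|y'|=\phi(y')^2/(|y|+|y'|)=O(|y'|^{1+2\gamma})$, the two regions differ only by rings of width $O(\rho^{1+2\gamma})$ at radii $\rho\in\{\delta,\e\}$, on which $|K(-(y',0))|=O(\rho^{-(n-1)})$; the resulting contribution is $O(\delta^{2\gamma})+O(\e^{2\gamma})$ and is absorbed into the error. With that correction your Cauchy-criterion estimate $|I_\delta-I_\e|=O(\e^{\beta})$ (with $\beta=\min(\gamma,\alpha)$, $\alpha$ the H\"older exponent of $\varphi$) goes through.
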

\begin{proof}
It is easy to get rid of $\varphi.$ Indeed
\begin{equation*}\label{}
\begin{split}
&\int_{\{y\in \partial D: \ep<|y-x|\}} K(x-y) \varphi(y) n_j(y)\,d\sigma(y) \\*[5pt]
&=\int_{\{y\in \partial D: \ep<|y-x|\}} K(x-y) (\varphi(y)- \varphi(x)) n_j(y)\,d\sigma(y) \\*[5pt] 
&+ \varphi(x) \int_{\{y\in \partial D: \ep<|y-x|\}} K(x-y) n_j(y)\,d\sigma(y)
\end{split}
\end{equation*}
and the first integral in the right hand side tends as $\ep \to 0$ to the absolutely convergent integral
$$
\int_{\partial D} K(x-y) (\varphi(y)- \varphi(x)) n_j(y)\,d\sigma(y).
$$
Hence we can assume that $\varphi$ is identically $1.$

We can also assume, as in the proof of the previous lemma, that $x=0$, the tangent hyperplane to $\partial D$ at $0$ is $\{y \in \R^n:y_n=0\}$ and the domain $D$ inside $B(0,\ep)$ is  exactly $\{y \in B(0,\ep) : y_n < \varphi(y')\}.$
By the divergence theorem
\begin{equation*}\label{}
\begin{split}
\int_{\{y\in \partial D: \ep<|y|\}} K(-y)  n_j(y)\,d\sigma(y) &=- \int_{\{y\in D: \ep < |y| \}} \partial_j K(y)\,dy 
+ \int_{\{y\in D: |y|=\ep\}} K(y)  n_j(y)\,d\sigma(y)\\*[5pt] & =-I+II,
\end{split}
\end{equation*}
where in the last identity one is defining $I$ and $II.$

To apply the previous lemma to $I$ we need to check that $\partial_j K(y),$  which is continuous off the origin, even and homogeneous of degree $-n$ and  has vanishing integral on the unit sphere. By the divergence Theorem
$$
\int_{1< |y|< 2} \partial _j K(y)\,dy= \int_{|y|= 2} K(y)\,n_j(y)\,d\sigma(y)-\int_{|y|=1} K(y)\,n_j(y)\,d\sigma(y),
$$
which is $0,$ since the two integrals over the spheres are the same by homogeneity. Hence, changing to polar coordinates,
$$
0=\int_{1< |y|< 2} \partial _j K(y)\,dy=\log 2 \, \int_{|\theta|=1} K(\theta)\,d\sigma(\theta),
$$
which takes care of $I.$

For the term $II$, set, as before, $H_{-}=\{y\in \R^n : y_n <0\}.$  We then have
\begin{equation*}\label{}
\begin{split}
\int_{\{y\in D: |y|=\ep\}} K(y)  n_j(y)\,d\sigma(y)&= \int_{\{y \in D\setminus H_{-} : |y|= \ep\}} K(y)\,n_j(y)\,d\sigma(y)\\*[5pt]
&+\int_{\{y \in H_{-} : |y|= \ep\}} K(y)\,n_j(y)\,d\sigma(y)\\*[5pt]
&-\int_{\{y \in H_{-}\setminus D: |y|= \ep\}} K(y)\,n_j(y)\,d\sigma(y)\\*[5pt]
\end{split}
\end{equation*}
The first and third terms tend to $0$ with $\ep,$ because the domains of integration are tangential. Indeed,
$$
\sigma \left(\partial B(0,\ep) \cap (D\setminus H_{-})	\right)+\sigma \left(\partial B(0,\ep) \cap (H_{-}\setminus D)\right) \le C \, \ep^{n-1+\gamma}
$$
and so the absolute value of the first and third terms can be estimated by $C\,\ep^\gamma.$

It only remains to note that the second term is independent of $\ep,$ by homogeneity.

\end{proof}

\begin{acknowledgements} 
The authors warmly thank Francisco de la Hoz for a numerical simulation of the ellipses' evolution; they acknowledge support by 2017-SGR-395 (Generalitat de Cata\-lunya) and PID2020-112881GB-I00.  Mateu, Orobitg and Verdera are supported by Severo Ochoa and Maria de Maeztu centers of excellence CEX2020-001084-M.
\end{acknowledgements}

\bigskip

\vspace{0.5cm}
{\small
\begin{tabular}{@{}l}
J.C.\ Cantero,\\ Departament de Matem\`{a}tiques, Universitat Aut\`{o}noma de Barcelona.\\
 J.\ Mateu, J.\ Orobitg and J.\ Verdera,\\
Departament de Matem\`{a}tiques, Universitat Aut\`{o}noma de Barcelona,\\ 
Centre de Recerca Matem\`atica, Barcelona, Catalonia.\\
{\it E-mail:} {\tt cantero@mat.uab.cat},\, {\tt mateu@mat.uab.cat},\, {\tt orobitg@mat.uab.cat}\\ {\tt jvm@mat.uab.cat}\\*[5pt]

\end{tabular}}
\end{document}